\def\inte#1{
\displaystyle\mathop{#1\kern0pt}^\circ }
\def\dj{\Delta_j}
\def\djp{\Delta_{j'}}
\def\orr{\omega^r}
\def\ot{\omega^\theta}
\def\orr{\omega^r}
\def\oz{\omega^z}
\def\ur{u^r}
\def\ut{u^\theta}
\def\br{b^r}
\def\bt{b^\theta}
\def\bz{b^z}
\def\uz{u^z}
\def\Jr{J^r}
\def\Jz{J^z}
\let\e=\varepsilon
\let\d=\partial
\let\pa=\partial
\let\wt=\widetilde
\def\ma{{\mathfrak a}}
\def\cB{{\mathcal B}}
\def\cC{{\mathcal C}}
\def\cF{{\mathcal F}}
\def\cP{{\mathcal P}}
\def\cS{{\mathcal S}}
\let\f=\frac
\let\D=\Delta
\renewcommand{\div}{{\rm div}\,}
\newcommand{\loc}{{\rm loc}\,}
\newcommand{\Lip}{{\rm Lip}\,}
\newcommand{\Supp}{{\rm Supp}\,}
\newcommand\ds{\displaystyle}
\newcommand{\Rmnum}[1]{\uppercase\expandafter{\romannumeral #1} }
 \numberwithin{equation}{section}
\let\s=\sigma
\def\dB{\dot{B}}
\def\virgp{\raise 2pt\hbox{,}}
\def\cdotpv{\raise 2pt\hbox{;}}
\def\eqdefa{\buildrel\hbox{\footnotesize def}\over =}
\def\C{\mathop{\mathbb C\kern 0pt}\nolimits}
\def\DD{\mathop{\mathbb D\kern 0pt}\nolimits}
\def\EE{\mathop{{\mathbb E \kern 0pt}}\nolimits}
\def\K{\mathop{\mathbb K\kern 0pt}\nolimits}
\def\N{\mathop{\mathbb N\kern 0pt}\nolimits}
\def\Q{\mathop{\mathbb Q\kern 0pt}\nolimits}
\def\R{\mathop{\mathbb R\kern 0pt}\nolimits}
\def\SS{\mathop{\mathbb S\kern 0pt}\nolimits}
\def\ZZ{\mathop{\mathbb Z\kern 0pt}\nolimits}
\def\TT{\mathop{\mathbb T\kern 0pt}\nolimits}
\def\PP{\mathop{\mathbb P\kern 0pt}\nolimits}
\newcommand{\Z}{{\ZZ}}
\def\dive{\mathop{\rm div}\nolimits}
\def\curl{\mathop{\rm curl}\nolimits}
\def\Supp{\mathop{\rm Supp}\nolimits\ }
\def\no{\noindent}
\def\na{\nabla}
\def\p3{\partial_3}
\def\u3{u^3}
\def\b3{b^3}
\def\om3{\omega^3}
\def\B0{B^0_{\infty,1}}
\newcommand{\beq}{\begin{equation}}
\newcommand{\eeq}{\end{equation}}
\newcommand{\ben}{\begin{eqnarray}}
\newcommand{\een}{\end{eqnarray}}
\newcommand{\beno}{\begin{eqnarray*}}
\newcommand{\eeno}{\end{eqnarray*}}
\newcommand{\andf}{\quad\hbox{and}\quad}
\newcommand{\with}{\quad\hbox{with}\quad}
\newtheorem{defi}{Definition}[section]
\newtheorem{thm}{Theorem}[section]
\newtheorem{lem}{Lemma}[section]
\newtheorem{rmk}{Remark}[section]
\begin{document}
\title[]
{On the global well-posedness of 3D axisymmetric MHD system with pure swirl magnetic field}

\author[Y. Liu]{Yanlin Liu}
\address[Y. Liu]{department of mathematical sciences, university of science and technology of china, Hefei 230026, CHINA,
and Academy of Mathematics $\&$ Systems Science, Chinese Academy of
Sciences, Beijing 100190, CHINA.} \email{liuyanlin3.14@126.com}

\date{\today}

\maketitle

\begin{abstract}
In this paper, we consider the axisymmetric MHD system with nearly critical initial data
having the special structure: $u_0=u_0^r e_r+\ut_0 e_\theta+u_0^z e_z,
~b_0=b_0^\theta e_\theta.$ We prove that, this system is global well-posed provided the scaling-invariant
norms $\|r\ut_0\|_{L^\infty},~\|r^{-1}\bt_0\|_{L^{\frac32}}$
are sufficiently small.
\end{abstract}

Keywords: Axisymmetric MHD system, pure swirl magnetic field, critical spaces, mild solutions,
Littlewood-Paley Theory.


\section{Introduction}\label{sec1}
In this work, we investigate the global well-posedness
of the 3D axisymmetric MHD system.
In general, the 3D incompressible MHD system in the Euclidean coordinates
reads
\begin{equation}\label{MHD}
\left\{\begin{array}{ll}
\partial_{t}u + u\cdot \nabla u + \nabla P = \triangle u + b\cdot \nabla b,
\qquad(t,x)\in\R^+\times\R^3& \\
\partial_{t}b + u\cdot \nabla b = \triangle b + b\cdot \nabla u, & \\
 \rm{div}\ u =0, \quad \rm{div}\ b = 0, &\\
u|_{t=0}=u_0,\quad b|_{t=0}=b_0.&
\end{array}
\right.
\end{equation}
where $u$, $P$ denote the velocity and scalar pressure of the fluid
respectively, and $b$ denotes the magnetic field.
This system describes the time evolution of viscous
electrically-conducting fluids moving through a prevalent magnetic fields, such as plasmas, liquid metals, etc.

Note that when $b$ is identically zero, the system \eqref{MHD} reduces to the classical Navier-Stokes equations,
hence we can't expect to have a better theory on MHD than on the Navier-Stokes equations.
It is well-known that the global-wellposedness of 3D Navier-Stokes equations is still one of the most
challenging open problems in fluid mechanics, thus many efforts are made to study the solutions with some
special structures. The geometric structure axisymmetric is such an important case.
We call a vector field $v$ is axisymmetric if it can be written as
\begin{equation}\label{defaxi}
v(t,x)=v^r(t,r,z)e_r+v^\theta(t,r,z)e_\theta+v^z(t,r,z)e_z,
\end{equation}
where $(r,\theta,z)$ are the usual cylindrical coordinates in $\R^3$,
defined by $x=(r\cos\theta,r\sin\theta,z)$,~$r=\sqrt{x_1^2+x_2^2}$ for any $x\in\R^3$, and
$e_r=(\cos\theta,\sin\theta,0),e_\theta=(-\sin\theta,\cos\theta,0),e_z=(0,0,1)$.
$v^\theta$ is called the swirl component, and we say $v$ is axisymmetric without swirl if $v^\theta=0$.

For the axisymmetric without swirl solutions of Navier-Stokes equations,
Ladyzhenskaya \cite{La} and independently Ukhovskii and Yudovich
\cite{UY}  proved the
existence of weak solutions along with the uniqueness and regularities of such solutions,
\cite{LMNP} gived a
refined proof.
Abidi \cite{Abidi} gives global well-posedness in critical space $\dot{H}^{\frac12}$.

But for the case axisymmetric with non-trivial swirl, the global-wellposedness problem of Navier-Stokes equations is still open,
and seems as difficult as for the general case without any geometric structure.
The works for this case all need to put some smallness conditions on the initial data, see \cite{Chenhui,LZ,LeiZhang,Wei,ZZT2} for example.

For the general MHD system \eqref{MHD}, just as Navier-Stokes equations,
we also have local well-posedness result, and global well-posedness with small initial data,
see \cite{Paicu,Lions,Temam}.

Inspired by Lei \cite{Lei}, there he considered a family of special axisymmetric initial data
whose swirl components of the velocity field and magnetic vorticity field  are trivial, precisely
$$u(0,x)=u_0^r(r,z)e_r+u_0^z(r,z)e_z,
\ b(0,x)=b_0^\theta(r,z)e_\theta.$$
And he also assumed that the initial data are much regular satisfying $u_0,~b_0\in H^2,~r^{-1}\bt_0\in L^\infty$.
Then he can prove \eqref{MHD} is global well-posed, without any smallness assumptions.

In this paper, we consider the case where the swirl component of velocity is non-trivial:
\begin{equation}\label{initialstructure}
u(0,x)=u_0^r(r,z)e_r+u_0^\theta(r,z)e_\theta+u_0^z(r,z)e_z,
\ b(0,x)=b_0^\theta(r,z)e_\theta.
\end{equation}
As mentioned above, in this case, we need to handle some additional quadratic terms
caused by the swirl component $\ut$, and we can not expect global-wellposedness without any smallness assumptions.
Another difference is that, we only assume the initial data in the nearly critical spaces~(see Remark \ref{rmkcritical}),
not as regular as in \cite{Lei}, this brings some technical difficulties.

Before preceding, let us investigate the structure of the solutions and the equations.
It is classical that for axisymmetric initial data, the solutions remain axisymmetric
provided the solutions are regular enough. We claim that, if the solutions satisfy $u\in L^1(0,T;\Lip)$, then they
are not only axisymmetric, but also preserve the special form as the initial data:
\begin{equation}\label{solgeostru}
u(t,x)=u^r(t,r,z)e_r+u^\theta(t,r,z)e_\theta+u^z(t,r,z)e_z,
\ b(t,x)=b^\theta(t,r,z)e_\theta,\quad\forall t\in[0,T[.
\end{equation}
Indeed, we write the equations for $b$ in the cylindrical coordinates, to get
\begin{equation}\label{axi}
\left\{
\begin{array}{l}
\displaystyle \pa_t \br+(u^r\pa_r+u^z\pa_z) \br-(\Delta-\frac{1}{r^2})\br=
(\br\pa_r+\bz\pa_z)\ur,\\
\displaystyle \pa_t \bt+(u^r\pa_r+u^z\pa_z) \bt-(\Delta-\frac{1}{r^2})\bt=
(\br\pa_r+\bz\pa_z)\ut+\frac{\ur\bt}{r},\\
\displaystyle \pa_t b^z+(u^r\pa_r+u^z\pa_z) b^z-\Delta b^z=
(\br\pa_r+\bz\pa_z)\uz,\\
\displaystyle \pa_r (r b^r)+\pa_z (r b^z)=0.
\end{array}
\right.
\end{equation}
Applying $L^2$ estimate to $\br$ and $\bz$, and then integrating in time, we obtain
\begin{equation}\begin{split}
\|\br(t)\|_{L^2}^2+\|\bz(t)\|_{L^2}^2&\leq 2\int_0^t
\int_{\R^3}|\br(\br\pa_r+\bz\pa_z)\ur|+|\bz(\br\pa_r+\bz\pa_z)\uz|\,dx\,dt'\\
&\leq 4\int_0^t\bigl(\|\br\|_{L^2}^2+\|\bz\|_{L^2}^2\bigr)
\|\nabla u\|_{L^\infty}\,dt'.
\end{split}\end{equation}
Then Gronwall's inequality, combining with the initial data $\br_0=\bz_0=0$,
as well as $u\in L^1(0,T;\Lip)$,
guarantees that for any following time $t\in[0,T[$, we always have
$\br(t)=\bz(t)=0$, which is exactly the geometric structure \eqref{solgeostru}.
Thus we can reformulate \eqref{MHD} as
\begin{equation}\label{axi}
\left\{
\begin{array}{l}
\displaystyle \pa_t u^r+(u^r\pa_r+u^z\pa_z) u^r-(\Delta-\frac{1}{r^2})u^r
-\frac{(u^\theta)^2}{r}+\frac{(b^\theta)^2}{r}+\pa_r P=0,\\
\displaystyle \pa_t \ut+(u^r\pa_r+u^z\pa_z) \ut-(\Delta-\frac{1}{r^2})u^\theta+\frac{u^r u^\theta}{r}=0,\\
\displaystyle \pa_t u^z+(u^r\pa_r+u^z\pa_z) u^z-\Delta u^z+\pa_z P=0,\\
\displaystyle \pa_t \bt+(u^r\pa_r+u^z\pa_z) \bt-(\Delta-\frac{1}{r^2})\bt-\frac{\ur\bt}{r}=0,\\
\displaystyle \pa_r (r u^r)+\pa_z (r u^z)=0.
\end{array}
\right.
\end{equation}

For the axisymmetric velocity field $u$, we can write the vorticity
$\omega=\curl u$ as $\omega=\omega^r e_r+\omega^\theta e_\theta+\omega^z e_z,$
where
$\omega^r=-\pa_z u^\theta,~ \omega^\theta=\pa_z u^r-\pa_r u^z,~ \omega^z=\pa_r u^\theta+\frac{u^\theta}{r},$
satisfying
\begin{equation}\label{axiomega}
\left\{
\begin{array}{l}
\displaystyle \pa_t \omega^r+(u^r\pa_r+u^z\pa_z) \omega^r-(\Delta-\frac{1}{r^2})\omega^r-(\omega^r\pa_r+\omega^z\pa_z)u^r=0,\\
\displaystyle \pa_t \ot+(u^r\pa_r+u^z\pa_z) \ot-(\Delta-\frac{1}{r^2})\omega^\theta
-\frac{2u^\theta \pa_z u^\theta}{r}+\frac{2\bt \pa_z \bt}{r}-\frac{u^r \omega^\theta}{r}=0,\\
\displaystyle \pa_t \oz+(u^r\pa_r+u^z\pa_z)\oz-\Delta \omega^z-(\omega^r\pa_r+\omega^z\pa_z)u^z=0,\\
\displaystyle \omega|_{t=0} =\omega_0=\curl u_0.
\end{array}
\right.
\end{equation}

Denote $\widetilde{u}\eqdefa u^r e_r+u^z e_z,~\widetilde{\omega}\eqdefa \orr e_r+\oz e_z$. It is easy to check that
\begin{equation}\label{divcurl}
\dive \wt{u}=0 \andf \curl \wt{u} =\omega^\theta e_\theta,
\end{equation}
so that the Biot-Savart law shows that $u^r,~u^z$ can be uniquely determined by $\ot$.
Hence the System \eqref{axi} can be reformulated as
the equations for $\ut,~\bt$ and $\ot$.
Let us introduce another three variables which are of great importance in our work, namely
\begin{equation}\label{defetaV}
B\eqdefa \frac{\bt}{r},\quad
\eta\eqdefa\frac{\ot}{r},\quad V^\varepsilon\eqdefa \frac{\ut}{r^{1-\varepsilon}}
\quad \mbox{for any}\ \varepsilon\in]0,1[.
\end{equation}
Then we can use $B,~\eta$ and $V^\varepsilon$ to reformulate \eqref{axi} as follows:
\begin{equation}\label{etaV}
\left\{
\begin{array}{l}
\displaystyle \pa_t B+(u^r\pa_r+u^z\pa_z) B-(\Delta+\frac 2r\pa_r)B=0,\\
\displaystyle \pa_t \eta+(u^r\pa_r+u^z\pa_z) \eta-(\Delta+\frac 2r\pa_r)\eta
-\frac{2V \pa_z V}{r^{2\varepsilon}}+2B \pa_z B=0,\\
\displaystyle \pa_t V+(u^r\pa_r+u^z\pa_z) V+(2-\varepsilon)\frac{u^r V}{r}
-(\Delta+ 2(1-\varepsilon)\frac {\pa_r}{r})V
+(2\varepsilon-\varepsilon^2)\frac{V}{r^2}=0,\\
\displaystyle B|_{t=0} =B_0=\frac{\bt_0}{r},\quad
\eta|_{t=0} =\eta_0=\frac{\ot_0}{r},\quad V|_{t=0} =V_0=\frac{\ut_0}{r^{1-\varepsilon}},
\end{array}
\right.
\end{equation}
here and in all that follows, we always denote $V^\varepsilon$ as $V$,
if there is no ambiguity.

Our main result states as follows.
\begin{thm}\label{thmmain}
{\sl If there exists some $p\in\bigl]1,\f{63}{61}\,\bigr]$, and the corresponding $\e$ given by
\begin{equation}\label{thmmain1}
\e=\frac27-\frac{60(p-1)}{7(3-p)}\in\Bigl[\,\frac17,\frac27\,\Bigr[,
\end{equation}
such that the axisymmetric initial data $(u_0,\,b_0)$ with the special form \eqref{initialstructure} satisfying
\begin{equation}\label{thmmain2}
 u_0\in B^{-1}_{\infty,1},~\omega_0\in L^{\frac32},~ J_0\in L^{\frac32},
~ r\ut_0\in L^\infty\bigcap L^{\frac{\ma(p)}{p-1}},~ \eta_0\in L^p,
~ V^{\e}_0\in L^{\frac74},~B_0\in L^{\frac{3}2p},
\end{equation}
where $J_0\eqdefa \curl b_0,~\mathfrak{a}(p)\eqdefa \frac{(3-p)(23p-21)}{12(3p+1)}\in\bigl]\frac1{12},\frac{168}{1525}\bigr]$,
$B^{-1}_{\infty,1}$ is the homogeneous Besov space, see Definition \ref{defBesov}.
Furthermore, if
$\|r\ut_0\|_{L^\infty},~\|B_0\|_{L^{\frac32}}$
are small enough satisfying
\begin{equation}\begin{split}
&\|r\ut_0\|_{L^\infty}\leq c_0\min\biggl\{
(p-1)^{\f{20p}{7(3-p)}}(2M_0)^{-\f{4(p+2)}{7(3-p)}},
(p-1)^{8}(2M_0)^{-\f{2(p-1)}{p}}\|r\ut_0\|_{L^{\frac{\mathfrak{a}(p)}{p-1}}}^{-\f{23p-21}{2p}}
\biggr\},\\
&\|B_0\|_{L^{\frac32}}\leq c_0(p-1)^8(2M_0)^{-\frac{2(p-1)}{p}} \|B_0\|_{L^{\frac32 p}}^{-3},
\end{split}\end{equation}
where $c_0$ is some small universal constant,
and $M_0\eqdefa\|\eta_0\|_{L^p}^p+\|V_0\|_{L^{\frac74}}^{\frac74}+\|B_0\|_{L^{\frac{6p}{3+p}}}^{\frac{6p}{3+p}}$.
Then the MHD system \eqref{MHD} has a unique global solution in the space
$$ u\in
{L}^\infty\bigl([0,\infty[; B^{-1}_{\infty,1} \bigr)
\bigcap L^1_{\loc}\bigl([0,+\infty[;B^{1}_{\infty,1}\bigr),
\, \omega\in C\bigl([0,\infty[;L^{\frac32}\bigr),
\, J\in C\bigl([0,\infty[;L^{\frac32} \bigr),$$
$$\eta\in C\bigl([0,\infty[;L^{p}\bigr),\, V\in C\bigl([0,\infty[;L^{\frac74}\bigr),
\, B\in C\bigl([0,\infty[; L^{\frac{3}2p}\bigr),
\, r\ut \in C\bigl([0,\infty[;L^\infty\bigcap L^{\frac{\mathfrak{a}(p)}{p-1}}\bigr).
$$
Moreover, this solution admits the special form \eqref{solgeostru},
and for any $t\in [0,\infty[$, there holds
\begin{equation}\begin{split}\label{thmmain3}
\|\eta(t)\|_{L^p}^p+\|V(t)\|_{L^{\frac74}}^{\frac74}
+(p-1)\bigl\|\nabla|\eta|^{\frac p2}\bigr\|_{L^2_t L^2}^2
+\bigl\|\na|V|^{\frac 78}\bigr\|_{L^2_t L^2}^2
+\Bigl\|\frac{|V|^{\frac78}}{r}\Bigr\|_{L^2_t L^2}^2
\leq 2M_0,
\end{split}\end{equation}
\begin{equation}
\|\omega(t)\|_{L^{\frac32}}^{\frac32}
+ \bigl\|\nabla |\omega|^{\frac34}\bigr\|_{L^2_t L^2}^2
\leq C
\exp\Bigl\{C N_0 \exp\bigl\{C(2M_0)^{\frac{2}{2p-1}}
\cdot t^{\frac{3(p-1)}{2p-1}}\bigr\}\Bigr\},
\end{equation}
\begin{equation}
\|J(t)\|_{L^{\frac32}}^{\frac32}+
\bigl\|\nabla |J|^{\frac34}\bigr\|_{L^2_t L^2}^2
\leq C\exp\Bigl\{C N_0^{\frac53} \exp\bigl\{C(2M_0)^{\frac{2}{2p-1}}
\cdot t^{\frac{3(p-1)}{2p-1}}\bigr\}\Bigr\},
\end{equation}
\begin{equation}
\|u(t)\|_{B^{-1}_{\infty,1}}+\|u\|_{L^1_t B^{1}_{\infty,1}}
\leq \|u_0\|_{B^{-1}_{\infty,1}}+C\exp\Bigl\{C \bigl(N_0+N_0^{\frac53}\bigr) \exp\bigl\{C(2M_0)^{\frac{2}{2p-1}}
\cdot t^{\frac{3(p-1)}{2p-1}}\bigr\}\Bigr\},
\end{equation}
where $N_0\eqdefa \|\omega_0\|_{L^{\frac32}}^{\frac32}+\|J_0\|_{L^{\frac32}}^{3}
+\|r\ut_0\|_{L^\infty}^{\frac{3(39p-37)}{16(2p-1)}}
(2M_0)^{-\frac{3(p+2)}{4(2p-1)}}$.
}
\end{thm}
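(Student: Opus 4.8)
The plan is to obtain the solution by extending the local-in-time solution furnished by the classical well-posedness theory for \eqref{MHD} (which applies because of the full dissipation), with the help of uniform a priori estimates, or equivalently by passing to the limit in a Friedrichs approximation. All the substance is in the a priori estimates, which I would run in the hierarchical order already suggested by the partial decoupling in \eqref{etaV}. First, the self-contained quantities: $r\ut$ solves a scalar transport--diffusion equation $\pt(r\ut)+(\ur\pa_r+\uz\pa_z)(r\ut)-(\D-\f2r\pa_r)(r\ut)=0$ with \emph{no} coupling to $b$, so a maximum/$L^q$ principle gives $\|r\ut(t)\|_{L^q}\le\|r\ut_0\|_{L^q}$ for every $q$ (in particular $\|r\ut(t)\|_{L^\infty}$ and $\|r\ut(t)\|_{L^{\ma(p)/(p-1)}}$ propagate); likewise $B=\bt/r$ solves the linear transport--diffusion equation in \eqref{etaV}, so $\|B(t)\|_{L^q}\le\|B_0\|_{L^q}$ for all $q\in[1,\infty]$ together with the dissipation bound $\|\na|B|^{q/2}\|_{L^2_tL^2}^2\lesssim\|B_0\|_{L^q}^q$ (the operator $\D+\f2r\pa_r$ being coercive, as it can be viewed as the Laplacian on axisymmetric functions in five space dimensions, the boundary contribution at $r=0$ being favourable).

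\textbf{The coupled critical estimate (the heart of the matter).} Next I would run a \emph{coupled} $L^p$/$L^{7/4}$/$L^{3p/2}$ energy estimate on $(\eta,V,B)$ from \eqref{etaV}. Testing the $\eta$-equation against $|\eta|^{p-2}\eta$ produces the dissipation $\sim(p-1)\|\na|\eta|^{p/2}\|_{L^2}^2$ plus a favourable $r=0$ boundary term; testing the $V$-equation against $|V|^{-1/4}V$ produces $\|\na|V|^{7/8}\|_{L^2}^2$ and the good zeroth-order term $\big\|\,|V|^{7/8}/r\,\big\|_{L^2}^2$ (the coefficient $2\e-\e^2$ being positive); testing the $B$-equation against $|B|^{3p/2-2}B$ closes the $L^{3p/2}$ bound. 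Into these one must absorb the sources $\int\f{2V\pa_zV}{r^{2\e}}|\eta|^{p-2}\eta$ and $\int 2B\pa_zB\,|\eta|^{p-2}\eta$ in the $\eta$-equation, and the drift $(2-\e)\int\f{\ur V}{r}|V|^{7/4}$ in the $V$-equation. The drift is controlled via the axisymmetric Biot--Savart bound $\|\ur/r\|_{L^{3/2}}\lesssim\|\eta\|_{L^{3/2}}$, Hölder, and the Sobolev inequality $\big\|\,|V|^{7/4}\big\|_{L^3}=\big\|\,|V|^{7/8}\big\|_{L^6}^2\lesssim\|\na|V|^{7/8}\|_{L^2}^2$, the interpolated $L^{3/2}$-norm of $\eta$ (between $L^p$ and the dissipation) carrying a favourable power; the first $\eta$-source, after integrating $\pa_z$ by parts, is $\lesssim\int\f{V^2}{r^{2\e}}|\eta|^{p/2-1}\big|\na|\eta|^{p/2}\big|$ and splits by a three-factor Hölder inequality into a piece absorbed by $(p-1)\|\na|\eta|^{p/2}\|_{L^2}^2$ and a remainder bounded in terms of $\|r\ut_0\|_{L^\infty}$ times Gagliardo--Nirenberg norms of $V$ and $\eta$; the $B$-source is handled the same way using Step~1 and the smallness of $\|B_0\|_{L^{3/2}}$. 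The relation \eqref{thmmain1} between $p$ and $\e$ and the precise smallness thresholds in the hypotheses are exactly what makes all powers of $r$, all Sobolev exponents and all scaling weights match so that every bad term is absorbed, yielding \eqref{thmmain3} on $[0,\infty[$. \textbf{Making this absorption work --- closing the $r^{-2\e}$-weighted quadratic coupling between $\eta$ and $V$ against the dissipation using only the scaling-invariant smallness of $\|r\ut_0\|_{L^\infty}$ and $\|B_0\|_{L^{3/2}}$ --- is the main obstacle, and the reason for the intricate bookkeeping of exponents.}

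\textbf{Higher integrability: $\omega$, then $J$, then $u$.} Granting \eqref{thmmain3}, the quantities $\ur/r$, $\f{\ut\pa_z\ut}{r}=r^{1-2\e}V\pa_zV$ and $\f{\bt\pa_z\bt}{r}=rB\pa_zB$ lie in the appropriate $L^\rho_tL^\sigma$ spaces. I would then run an $L^{3/2}$ energy estimate on $\omega$ using \eqref{axiomega}: the vortex-stretching terms $(\orr\pa_r+\oz\pa_z)\ur$, $(\orr\pa_r+\oz\pa_z)\uz$ and the drift $\f{\ur\ot}{r}$ are handled in the standard axisymmetric way (expressing $\orr,\oz$ through $V$), while the genuinely new terms $\f{2\ut\pa_z\ut}{r}$ and $\f{2\bt\pa_z\bt}{r}$ are absorbed using Steps~1--2; Gronwall then gives the stated double-exponential bound on $\|\omega(t)\|_{L^{3/2}}$. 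An analogous $L^{3/2}$ estimate on $J=\curl b$ (whose equation, obtained by taking the curl of the $b$-equation, carries sources quadratic in $(\omega,J,b,u)$) gives the bound on $\|J(t)\|_{L^{3/2}}$. Finally, since $\omega,J\in L^\infty_tL^{3/2}$, the terms $\div(u\otimes u)$ and $\div(b\otimes b)$ are controlled in suitable Besov norms, so the Duhamel formula $u(t)=e^{t\D}u_0-\int_0^t e^{(t-s)\D}\PP\,\div\bigl(u\otimes u-b\otimes b\bigr)(s)\,ds$ together with the heat-semigroup smoothing on Besov spaces and the product/paraproduct laws closes the $L^\infty_tB^{-1}_{\infty,1}\cap L^1_tB^1_{\infty,1}$ bound for $u$, yielding the last displayed inequality.

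\textbf{Passage to the limit and uniqueness.} The uniform estimates plus the equations provide enough time-regularity (Aubin--Lions) to pass to the limit in the approximation and obtain a global solution with the stated regularity; it has the special form \eqref{solgeostru} by the Gronwall argument given in the introduction, which applies once $u\in L^1_{\loc}\Lip$ --- guaranteed by $u\in L^1_{\loc}B^1_{\infty,1}$. Uniqueness follows from an energy estimate on the difference of two solutions in a space such as $L^\infty_tL^2\cap L^2_t\dH^1$, the quadratic terms being tamed by $u\in L^1_{\loc}\Lip$ and the $L^{3/2}$-control of $\omega$ and $J$.
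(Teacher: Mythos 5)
Your overall architecture coincides with the paper's: propagation of $\|r\ut\|_{L^q}$ and $\|B\|_{L^q}$ by maximum-principle/energy arguments, a coupled $L^p$/$L^{7/4}$/$L^{6p/(3+p)}$ energy estimate on $(\eta,V,B)$ closed by a continuity argument, then $L^{3/2}$ bounds on $\omega$ and $J$, and finally Duhamel plus Littlewood--Paley for the $L^1_tB^1_{\infty,1}$ bound on $u$. However, there is a genuine gap precisely at the point you yourself flag as the main obstacle: your recipe for the drift term $(2-\e)\int_{\R^3}\bigl|\f{\ur}{r}\bigr||V|^{7/4}\,dx$ in the $V$-equation does not close. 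First, the inequality $\|\ur/r\|_{L^{3/2}}\lesssim\|\eta\|_{L^{3/2}}$ is not available: under the scaling \eqref{scaling} the quantity $\|\ur/r\|_{L^q}$ scales like $\lambda^{2-3/q}$ while $\|\eta\|_{L^p}$ scales like $\lambda^{3-3/p}$, so a same-exponent bound is dimensionally inconsistent; the usable estimate (Lemma~\ref{controlur}, taken from \cite{LZ}) reads $\|\ur/r\|_{L^q}\lesssim\|\eta\|_{L^p}^{\lambda}\|\eta\|_{L^{3p}}^{1-\lambda}$ and holds only for $q>\f{3p}{3-p}>\f32$. Second, and more fundamentally, pairing $\|\ur/r\|$ against $\||V|^{7/4}\|_{L^3}\lesssim\|\na|V|^{7/8}\|_{L^2}^2$ places the \emph{full} power $2$ of the dissipation on the right-hand side with a prefactor depending only on $\eta$; since no smallness whatsoever is assumed on $\eta_0$ (only on $\|r\ut_0\|_{L^\infty}$ and $\|B_0\|_{L^{3/2}}$), this term cannot be absorbed into $\f{48}{49}\|\na|V|^{7/8}\|_{L^2}^2$. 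The missing idea is the pointwise identity $r^{2-\e}V=r\ut$, which lets one write $|V|^{7/4}=\f{|V|^{7/4}}{r^2}\,|r\ut|^{\f{2}{2-\e}}$ inside the Hölder step, extract the \emph{small} scaling-invariant factor $\|r\ut\|_{L^\infty}$ to a positive power, and land the remaining powers on the weighted dissipation $\bigl\||V|^{7/8}/r\bigr\|_{L^2}$ raised to a power strictly less than $2$ and on $\|\na|\eta|^{p/2}\|_{L^2}$, after which Young's inequality produces a small multiple of the total dissipation. Without this device the coupled estimate, hence \eqref{thmmain3} and everything downstream, is not obtained. (A smaller omission of the same flavour: to control $\f{2\bt\pa_z\bt}{r}$ in the $\ot$-equation the paper first needs an intermediate $L^3$ estimate of $\bt$, which your outline skips.)
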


\begin{rmk}
{\sl A direct calculation, combining with the Biot-Savart law \eqref{Biot}, gives
\begin{equation}
\|B_0\|_{L^{\frac32}}\leq
\Bigl\|\frac{\pa_\theta}{r}(\bt_0 e_\theta)\Bigr\|_{L^{\frac32}}\leq
\|\nabla b_0\|_{L^{\frac32}}\leq C\|J_0\|_{L^{\frac32}}.
\end{equation}
Hence the initial data given by \eqref{thmmain2} in fact satisfy $B_0\in L^q$, $\forall q\in\bigl[\frac32,\frac32 p\bigr]$.
In particular, we have $B_0\in L^{\frac{6p}{3+p}}$, and thus $M_0$ is finite.
}\end{rmk}

\begin{rmk}
{\sl For the special case $\ut\equiv 0$, as considered in \cite{Lei},
the solutions are globally well-posed without any smallness assumptions.
So from the view of stability~(although the classical stability result in Besov spaces
can not be applied to the case here, since the norms here have weights with some powers of $r$), it seems more reasonable to make
smallness conditions only on $\ut_0$ in our theorem. I will explain in the following,
why I think the smallness assumptions on $\bt_0$ can not be dropped.

Let us see this system in a physics view. As we know, the way that the magnetic field
influence the velocity field is by the Lorentz force it induced.
Noting that the direction of the Lorentz force induced
by the pure swirl magnetic field $\bt e_\theta$ is orthogonal to $e_\theta$,
thus this force does not influence the energy of $r\ut$.
Indeed, precisely we have $\|r\ut(t)\|_{L^\ell}\leq\|r\ut_0\|_{L^\ell},\ \forall\ell\in]0,\infty]$,
see \eqref{estimateru}. Thus if initially we have $\ut_0=0$, then it must remain $0$ all the time.
That's the reason why the system is well-posedness in the case $\ut_0=0$,
as considered in \cite{Lei}, no matter how large $\bt_0$ is.
But this Lorentz force does influence the distribution of $\ut$.
Let us analyse this intuitively. Due to the equation for $\ur$ in \eqref{axi},
$\pa_t u^r=\cdots-r^{-1}|b^\theta|^2$, if $\bt$ is large, then $\ur$ may decrease and be negative
after a long time. And $\ur$ being negative means that the particles are moving toward the axis,
and this concentration to axis may lead to a finite time blow-up for $V=\frac{\ut}{r^{1-\varepsilon}}$.
One can also see this from the equation of $V$ \eqref{etaV}, if we want to control $V$,
then $(2-\varepsilon)\frac{u^r V}{r}$ can be a bad term when $\ur$ is negative,
and can be dangerous if in addition the absolute value of $\ur$ near the axis is too large.
From this point of view, it is not difficult to understand why we need to put some smallness
conditions on $\bt_0$.
We would also like to mention that, this observation indeed coincides with the fact that, the singularities of axisymmetric solutions
can only occur on the axis~(see the celebrated paper \cite{CKN} by CKN).
}\end{rmk}

\begin{rmk}\label{rmkcritical}
{\sl The MHD system has the same scaling property as Navier-Stokes equations.
Precisely, if $(u,b,P)$ are solutions of \eqref{MHD} on $[0,T]\times \R^3$,
then $(u,b,P)_\lambda$ defined by
\begin{equation} \label{scaling}
 (u,b,P)_\lambda(t,x) \eqdefa \bigl(
\lambda u (\lambda^2t, \lambda x), \lambda b (\lambda^2t, \lambda
x), \lambda^2 P(\lambda^2t,\lambda x)\bigr)
\end{equation}
are also solutions
on~$[0,\lambda^{-2} T]\times \R^3$
with initial data $\lambda u_0(\lambda x),~\lambda b_0(\lambda x)$.
And we call a norm is scaling-invariant, or critical,
if it does not change under this scaling transition \eqref{scaling}.

In the limiting case $p=1$, all the norms of initial data in Theorem \ref{thmmain}, namely
$$\|u_0\|_{ B^{-1}_{\infty,1}},~\|\omega_0\|_{ L^{\frac32}},~ \|J_0\|_{ L^{\frac32}},
~ \|r\ut_0\|_{ L^\infty},~ \|\eta_0\|_{L^1},
~ \bigl\|r^{-\frac57}\ut_0\bigr\|_{ L^{\frac74}},~\|B_0\|_{L^{\frac32}},$$
are scaling-invariant. And we consider the case $p>1$ but can be arbitrarily close to $1$,
that's what we mean by nearly critical.
}\end{rmk}
Of course, considering initial data not so regular brings some technical difficulties.
A direct difficulty is that, if the initial data has $H^2$ regularity, then we can
use $L^2$ framework, which is more convenient. But here we need to use $L^p$ framework, see Section \ref{Secglobal} for details.

Another difficulty is that,
in order to preserve the geometric structure \eqref{initialstructure} of the initial data,
we need to verify that the solutions we found indeed satisfy $u\in L^1(0,t;\Lip)$ for any $t>0$. This is not difficult
if the initial data are more regular. For example, \cite{Lei} considers the initial data $(u_0,b_0)\in H^2$, then a direct
energy estimate gives $u\in L^2(0,t;H^3)$, which implies $u\in L^1(0,t;\Lip)$.
Obviously, this estimate wastes a lot of regularity.
Noting that the norm $L^1(0,t;\Lip)$ is also critical, so for the nearly critical case here,
we can no longer waste so much regularity. Thus we can not use a direct energy estimate
to get the $L^2$ in time estimate, but use the integral formula of solutions and the estimates
for heat semi-group to get the $L^1$ in time estimate, and this is
the reason why we assume $u_0\in B^{-1}_{\infty,1}$, see Section \ref{SecL1} for details.
\smallbreak
We end up this section with some notations.
$C$ stands for some real positive constant which may be different in each occurrence.
Sometimes we use the notation $a\lesssim b$ for the inequality $a\leq Cb$ for some uniform constant C,
and $a\thicksim b$ means that both $a\lesssim b$ and $b\lesssim a$ hold. For a Banach space B, we shall use the shorthand $L^p_tB$ for $L^p(0,t;B)$.


\section{Basic facts on Littlewood-Paley theory}

For any $a\in\cS'$, let us recall the
dyadic decompositions of the Fourier variables as follows:
\begin{equation}\label{defparaproduct}
\Delta_j a=\cF^{-1}\bigl(\varphi(2^{-j}|\xi|)\widehat{a}\bigr),\quad
S_j a=\sum\limits_{j'\leq j-1}\djp a=\cF^{-1}\bigl(\chi(2^{-j}|\xi|)\widehat{a}\bigr),
\end{equation}
where $\cF a$ and $\widehat{a}$ denote the Fourier transform of $a$,
$\chi$ and $\varphi$ are smooth functions such that
\begin{align*}
&\ \Supp \varphi \subset \bigl\{\tau \in \R\,:  \, \frac34 \leq
|\tau| \leq \frac83 \bigr\}\andf  \forall
 \tau>0\,,\ \sum_{j\in\Z}\varphi(2^{-j}\tau)=1,\\
 &\Supp \chi \subset \bigl\{\tau \in \R\,: \, |\tau|  \leq
\frac43 \bigr\} \andf \forall\tau\geq 0 \,,\ \chi(\tau)+ \sum_{j\geq
0}\varphi(2^{-j}\tau)=1.
\end{align*}

Now we are in a position to define the
homogeneous Besov space $B^s_{p,r}$.

\begin{defi}\label{defBesov}
{\sl  Let $(p,r)$ in $[1,+\infty]^2$ and $s$ in $\R$. Let us consider $u$ in ${\mathcal
S}_h',$ which means that $u$ is in $\cS'$ and satisfies $\ds\lim_{j\to-\infty}\|S_ju\|_{L^\infty}=0$. We set
$$
\|u\|_{B^s_{p,r}}\eqdefa\big\|\big(2^{js}\|\Delta_j
u\|_{L^{p}}\big)_j\bigr\|_{\ell ^{r}(\ZZ)}.
$$
\begin{itemize}

\item
For $s<\frac{3}{p}$ (or $s=\frac{3}{p}$ if $r=1$), we define $
B^s_{p,r}\eqdefa \big\{u\in{\mathcal S}_h'\,:\, \|
u\|_{B^s_{p,r}}<\infty\big\}.$

\item
If $\frac{3}{p}+k\leq
s<\frac{3}{p}+k+1$ (or $s=\frac{3}{p}+k+1$ if $r=1$) for some $k\in\N$, then we
define $B^s_{p,r}$  as the subset of $u$
in ${\mathcal S}_h'$ such that $\partial^\beta u\in\dB^{s-k}_{p,r}$ whenever $|\beta|=k.$
\end{itemize}
We remark that in particular, $B^s_{2,2}$
coincide with the homogeneous Sobolev spaces $\dot{H}^s$.}
\end{defi}

The following Lemma \ref{Bern} is the well-known Bernstein inequality,
and Lemma \ref{GBern} can be seen as a generalization of the Bernstein inequality
and the Mihlin Multiplier Theorem.

\begin{lem}[Lemma 2.1 of \cite{BCD}]\label{Bern}
{\sl  Let $\cC$ be an annulus and $\cB$ a ball of $\R^d$. Then for
any nonnegative integer N, and $1\leq p\leq
q\leq\infty$, we have
$$\Supp \widehat{a}\subset\lambda\cB\Longrightarrow\|D^N a\|_{L^q}\eqdefa\sup_{|\alpha|=N}\|\pa^\alpha a\|_{L^q}
\lesssim\lambda^{N+3(\frac 1p-\frac 1q)}\|a\|_{L^p},$$
$$\Supp \widehat{a}\subset\lambda\cC\Longrightarrow \lambda^N\|a\|_{L^p}\lesssim\|D^N
a\|_{L^p}\lesssim\lambda^N\|a\|_{L^p}.$$}
\end{lem}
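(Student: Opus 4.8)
The plan is to prove both inequalities first in the normalized case $\lambda=1$ and then recover general $\lambda$ by a dilation argument that merely tracks powers of $\lambda$. The underlying mechanism is always the same: a function whose Fourier transform is supported in a fixed compact set can be reconstructed as a convolution against a Schwartz kernel, after which Young's convolution inequality produces the $L^p\to L^q$ gain.

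For the first inequality at $\lambda=1$, I would fix $\phi\in C_c^\infty(\R^d)$ with $\phi\equiv1$ on the ball $\cB$. Since $\widehat a=\phi\,\widehat a$, we have $a=\cF^{-1}\phi * a$, and differentiating under the convolution gives $\pa^\alpha a=\bigl(\cF^{-1}((i\xi)^\alpha\phi)\bigr) * a$ for each multi-index $\alpha$. The kernel $\cF^{-1}((i\xi)^\alpha\phi)$ is Schwartz, hence lies in every $L^s$. Young's inequality with $1+\frac1q=\frac1s+\frac1p$ (so that $s\in[1,\infty]$ precisely because $p\le q$) then yields $\|\pa^\alpha a\|_{L^q}\lesssim\|a\|_{L^p}$ with a constant depending only on $\phi,N,p,q$; taking the supremum over $|\alpha|=N$ gives the normalized first estimate. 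The general statement follows by dilation: if $\Supp\widehat a\subset\lambda\cB$, set $a_\lambda\eqdefa a(\cdot/\lambda)$, so $\widehat{a_\lambda}(\xi)=\lambda^d\widehat a(\lambda\xi)$ is supported in $\cB$. Applying the normalized bound to $a_\lambda$ and using the scaling relations $\|\pa^\alpha a_\lambda\|_{L^q}=\lambda^{-N+d/q}\|\pa^\alpha a\|_{L^q}$ and $\|a_\lambda\|_{L^p}=\lambda^{d/p}\|a\|_{L^p}$ for $|\alpha|=N$, one reads off the factor $\lambda^{N+d(\frac1p-\frac1q)}$, which is the stated exponent $\lambda^{N+3(\frac1p-\frac1q)}$ in our dimension $d=3$.

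For the annulus inequality the upper bound $\|D^N a\|_{L^p}\lesssim\lambda^N\|a\|_{L^p}$ is a special case of the first inequality (take $q=p$), since $\lambda\cC$ is contained in a ball of radius comparable to $\lambda$. The genuinely new point, and the step I expect to be the crux, is the reverse bound $\lambda^N\|a\|_{L^p}\lesssim\|D^N a\|_{L^p}$. Here I would normalize to $\lambda=1$ and exploit the algebraic identity $|\xi|^{2N}=\bigl(\sum_j\xi_j^2\bigr)^N=\sum_{|\alpha|=N}\binom{N}{\alpha}\xi^{2\alpha}$. Choosing $\widetilde\varphi\in C_c^\infty(\R^d)$ supported away from the origin and equal to $1$ on $\cC$, one gets on $\Supp\widehat a$ the representation
$$\widehat a(\xi)=\sum_{|\alpha|=N}\binom{N}{\alpha}\,\frac{(-i)^N\,\xi^\alpha\,\widetilde\varphi(\xi)}{|\xi|^{2N}}\,\widehat{\pa^\alpha a}(\xi),$$
where I used $\xi^\alpha\widehat a=(-i)^{|\alpha|}\widehat{\pa^\alpha a}$ and the fact that inserting $\widetilde\varphi$ changes nothing on $\Supp\widehat a$. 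Each multiplier $m_\alpha(\xi)\eqdefa\binom{N}{\alpha}(-i)^N\xi^\alpha\widetilde\varphi(\xi)/|\xi|^{2N}$ is smooth and compactly supported (the denominator never vanishes on $\Supp\widetilde\varphi$), so $\cF^{-1}m_\alpha$ is Schwartz. Transforming back gives $a=\sum_{|\alpha|=N}\cF^{-1}m_\alpha * \pa^\alpha a$, and Young's inequality yields $\|a\|_{L^p}\le\sum_{|\alpha|=N}\|\cF^{-1}m_\alpha\|_{L^1}\|\pa^\alpha a\|_{L^p}\lesssim\|D^N a\|_{L^p}$. The same dilation bookkeeping restores the factor $\lambda^N$.

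The main obstacle here is structural rather than analytic: it is finding the multiplier representation that inverts $N$ derivatives on the annulus, for which the decomposition of $|\xi|^{2N}$ and the support separation from the origin are essential (they guarantee the inverting symbol is a bona fide Schwartz kernel). Once that identity is in hand, both inequalities reduce to Young's inequality against Schwartz kernels, and the only remaining care is to verify the index constraint $1+\frac1q=\frac1s+\frac1p$ admits $s\in[1,\infty]$, which holds exactly under the hypothesis $p\le q$.
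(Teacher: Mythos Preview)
Your argument is correct and is precisely the standard proof found in \cite{BCD}; the paper itself does not reproduce a proof but simply cites this reference. The convolution-with-Schwartz-kernel reduction, Young's inequality, the multinomial inversion of $|\xi|^{2N}$ on the annulus, and the dilation bookkeeping are all exactly as in the original source.
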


\begin{lem}[Lemma 2.2 of \cite{BCD}]\label{GBern}
{\sl  Let $\cC$ be an annulus, $m\in\R$ and $k=2[1+d/2]$.
If $\s$ is $k$-times differentiable on $\R^d\setminus\{0\}$,
and for any $\alpha\in\N^d$ with $|\alpha|\leq k$, there holds
\begin{equation}\label{conditionMihlin}
|\pa^\alpha\s(\xi)|\leq C_\alpha |\xi|^{m-|\alpha|},\quad \forall \xi\in\R^d\setminus\{0\}.
\end{equation}
Then for any $p\in[1,\infty]$, there exists a constant $C$ which depends only on $C_\alpha$,
such that
$$\Supp \widehat{a}\subset\lambda\cC\Longrightarrow \|\s(D)
a\|_{L^p}\leq C\lambda^m\|a\|_{L^p},\  \ \mbox{with}\ \
\s(D) a\eqdefa\cF^{-1}(\s\widehat{a}).$$}
\end{lem}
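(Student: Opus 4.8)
The plan is to represent the operator $\s(D)$ as convolution with an $L^1$ kernel whose norm scales like $\lambda^m$, and then close the estimate by Young's inequality. First I would fix once and for all a smooth cut-off $\phi$ supported in a slightly enlarged annulus with $\phi\equiv 1$ on $\cC$. Since $\Supp\widehat a\subset\lambda\cC$, we have $\widehat a(\xi)=\phi(\lambda^{-1}\xi)\widehat a(\xi)$, hence $\s(D)a=g_\lambda\star a$ with $g_\lambda\eqdefa\cF^{-1}\bigl(\s(\cdot)\phi(\lambda^{-1}\cdot)\bigr)$. Young's inequality then gives $\|\s(D)a\|_{L^p}\leq\|g_\lambda\|_{L^1}\|a\|_{L^p}$ for every $p\in[1,\infty]$, so the entire lemma reduces to establishing the single bound $\|g_\lambda\|_{L^1}\leq C\lambda^m$, with $C$ depending on $\s$ only through the constants $C_\alpha$.

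Next I would remove the $\lambda$-dependence by scaling. Writing $\theta(\zeta)\eqdefa\lambda^{-m}\s(\lambda\zeta)\phi(\zeta)$, a change of variables yields $g_\lambda(x)=\lambda^{m+d}(\cF^{-1}\theta)(\lambda x)$, whence $\|g_\lambda\|_{L^1}=\lambda^m\|\cF^{-1}\theta\|_{L^1}$; it therefore suffices to bound $\|\cF^{-1}\theta\|_{L^1}$ uniformly in $\lambda$. The key point is that on the fixed annular support of $\phi$ the variable $|\zeta|$ stays bounded away from $0$ and $\infty$, so hypothesis \eqref{conditionMihlin} gives $|\pa^\alpha[\s(\lambda\zeta)]|=\lambda^{|\alpha|}|(\pa^\alpha\s)(\lambda\zeta)|\leq C_\alpha\lambda^m|\zeta|^{m-|\alpha|}$. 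After multiplying by $\lambda^{-m}$ and applying the Leibniz rule against the fixed function $\phi$, every derivative of $\theta$ of order at most $k$ is bounded by a constant depending only on the $C_\alpha$, on $\phi$, and on $d$, uniformly in $\lambda$.

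Finally I would convert these derivative bounds into integrable pointwise decay of $\cF^{-1}\theta$. Setting $N\eqdefa[1+d/2]$ so that $k=2N$, the identity $(1+|x|^2)^N(\cF^{-1}\theta)(x)=\cF^{-1}\bigl((1-\D_\zeta)^N\theta\bigr)(x)$ together with $\|\cF^{-1}f\|_{L^\infty}\leq\|f\|_{L^1}$ gives $(1+|x|^2)^N|(\cF^{-1}\theta)(x)|\leq\|(1-\D_\zeta)^N\theta\|_{L^1}\leq C$, since $(1-\D_\zeta)^N\theta$ involves only derivatives of $\theta$ of order at most $2N=k$ and is supported in the fixed annulus. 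Because $N>d/2$, the weight $(1+|x|^2)^{-N}$ is integrable on $\R^d$, so $\|\cF^{-1}\theta\|_{L^1}\leq C\int_{\R^d}(1+|x|^2)^{-N}\,dx\leq C$. Combining the three steps gives $\|g_\lambda\|_{L^1}\leq C\lambda^m$ and hence the claim.

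The only delicate point is the choice $k=2[1+d/2]$: this is precisely the number of derivatives needed to guarantee $2N>d$, so that the decay produced by differentiating $k$ times under the inverse Fourier transform is strong enough to make the kernel integrable. With fewer derivatives the bound $\|\cF^{-1}\theta\|_{L^1}\leq C$ would fail and Young's inequality could not be applied. Everything else is bookkeeping, namely verifying that all constants depend on $\s$ only through the $C_\alpha$ and that the enlarged annulus carrying $\phi$ may be fixed independently of $\lambda$.
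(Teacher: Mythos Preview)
Your proposal is correct and follows exactly the standard route (localize on the annulus with a fixed cut-off, rescale to reduce to a $\lambda$-independent symbol supported on a fixed annulus, then use $k=2[1+d/2]$ derivatives to produce integrable decay of the kernel and close by Young's inequality). Note, however, that the paper does not supply its own proof of this lemma: it is quoted verbatim as Lemma~2.2 of \cite{BCD}, and the argument you have written is precisely the one given there.
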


 Lemma \ref{heatflow}
studies the action of heat flow over spectrally supported functions.

\begin{lem}[Lemma 2.4 in \cite{BCD}]\label{heatflow}
{\sl Let $\cC$ be an annulus. Positive constants c and C exist such that for any $p\in[1,\infty]$ and $t,\lambda>0$,we have
$$\Supp \widehat{u}\subset\lambda\cC\Longrightarrow\|e^{t\Delta}u\|_{L^p}\leq Ce^{-ct\lambda^2}\|u\|_{L^p}.$$
}\end{lem}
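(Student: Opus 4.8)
The plan is to realize $e^{t\Delta}$, restricted to frequencies in $\lambda\cC$, as convolution against a kernel whose $L^1$ norm carries the Gaussian decay, and then to conclude by Young's inequality uniformly in $p$. First I would fix a function $\phi\in C_c^\infty(\R^d)$ supported in an annulus $\widetilde\cC$ containing $\cC$ and with $\phi\equiv1$ on a neighborhood of $\cC$. Since $\Supp\wh u\subset\lambda\cC$, we have $\wh u(\xi)=\phi(\xi/\lambda)\wh u(\xi)$, hence $\wh{e^{t\Delta}u}(\xi)=e^{-t|\xi|^2}\phi(\xi/\lambda)\wh u(\xi)$. Thus $e^{t\Delta}u=K_{t,\lambda}*u$ with $\wh K_{t,\lambda}(\xi)=e^{-t|\xi|^2}\phi(\xi/\lambda)$, and Young's inequality gives, for every $p\in[1,\infty]$,
\[
\|e^{t\Delta}u\|_{L^p}\le\|K_{t,\lambda}\|_{L^1}\,\|u\|_{L^p}.
\]
Everything therefore reduces to the bound $\|K_{t,\lambda}\|_{L^1}\le Ce^{-ct\lambda^2}$ with $C,c$ independent of $t,\lambda$.

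Next I would remove the two parameters by scaling. Substituting $\xi=\lambda\zeta$ in the Fourier integral shows $K_{t,\lambda}(x)=\lambda^d g_s(\lambda x)$, where $s\eqdefa t\lambda^2$ and $g_s(y)\eqdefa\int_{\R^d}e^{iy\cdot\zeta}e^{-s|\zeta|^2}\phi(\zeta)\,d\zeta$. Since the $L^1$ norm is invariant under $x\mapsto\lambda x$ together with the Jacobian factor $\lambda^d$, one has $\|K_{t,\lambda}\|_{L^1}=\|g_s\|_{L^1}$, so it suffices to prove $\|g_s\|_{L^1}\le Ce^{-cs}$ uniformly for $s>0$. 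To extract the decay, let $m\eqdefa\inf_{\zeta\in\Supp\phi}|\zeta|>0$ and set $c\eqdefa m^2/2$; on $\Supp\phi$ we then have $|\zeta|^2-c\ge m^2/2>0$. Factoring $e^{-s|\zeta|^2}=e^{-cs}e^{-s(|\zeta|^2-c)}$ yields $g_s=e^{-cs}h_s$, where $h_s$ is the inverse Fourier transform of $\theta_s(\zeta)\eqdefa e^{-s(|\zeta|^2-c)}\phi(\zeta)$. It remains only to bound $\|h_s\|_{L^1}$ uniformly in $s$.

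For that last bound I would use the standard weighted estimate: with $N\eqdefa[d/2]+1$,
\[
\|h_s\|_{L^1}\le\Bigl(\int_{\R^d}(1+|y|^2)^{-N}\,dy\Bigr)\,\bigl\|(1+|y|^2)^{N}h_s\bigr\|_{L^\infty},
\]
the first factor being a finite constant because $2N>d$. Since multiplication by $y_j$ on the physical side corresponds to differentiation in $\zeta$, the quantity $(1+|y|^2)^{N}h_s$ is (up to a harmless constant) the inverse Fourier transform of $(1-\Delta_\zeta)^{N}\theta_s$, whence $\bigl\|(1+|y|^2)^{N}h_s\bigr\|_{L^\infty}\le C\,\|(1-\Delta_\zeta)^{N}\theta_s\|_{L^1}$. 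As $\theta_s$ is supported in the fixed compact set $\Supp\phi$, this last step reduces to a uniform-in-$s$ bound on the derivatives of $\theta_s$ up to order $2N$.

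The one point requiring care — and the only real obstacle — is that differentiating the factor $e^{-s(|\zeta|^2-c)}$ produces polynomial-in-$s$ prefactors: a derivative of order $|\alpha|$ contributes terms of the form $s^{|\alpha|}$ times a bounded polynomial in $\zeta$ times $e^{-s(|\zeta|^2-c)}$. Here the strict inequality $|\zeta|^2-c\ge m^2/2$ on $\Supp\phi$ is decisive, since it converts each such term into one dominated by $s^{|\alpha|}e^{-(m^2/2)s}$, which is bounded uniformly in $s\ge0$; this is exactly why I chose $c$ strictly below $m^2$ rather than equal to it. Consequently every derivative of $\theta_s$ up to order $2N$ is bounded uniformly in $s$, so $\|(1-\Delta_\zeta)^{N}\theta_s\|_{L^1}\le C$, hence $\|h_s\|_{L^1}\le C$ and $\|g_s\|_{L^1}=e^{-cs}\|h_s\|_{L^1}\le Ce^{-cs}=Ce^{-ct\lambda^2}$. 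Combining this with the Young estimate of the first paragraph finishes the proof, with $C$ and $c$ independent of $p$, $t$ and $\lambda$.
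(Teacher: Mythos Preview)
Your proof is correct and is essentially the standard argument given in \cite{BCD} (which the paper merely cites without reproducing a proof): localize with a cutoff $\phi$, reduce via Young's inequality to an $L^1$ bound on the convolution kernel, rescale to the single parameter $s=t\lambda^2$, factor out $e^{-cs}$, and control the remaining Schwartz function uniformly in $s$ by estimating finitely many $\zeta$-derivatives on the compact support. The only small stylistic difference is that BCD organizes the last step as a direct $L^\infty$ bound on $(1+|y|)^{d+1}h_s$ via integration by parts, rather than passing through $\|(1-\Delta_\zeta)^N\theta_s\|_{L^1}$, but the two are equivalent here.
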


Lemma \ref{productlaw} is the so-called tame estimates
for the product of two functions in Besov spaces.

\begin{lem}[Corollary 2.54 in \cite{BCD}]\label{productlaw}
{\sl If $(s,p,r)\in]0,\infty[\times[0,\infty]^2$ satisfies $s<\frac dp$, or $s=\frac dp$ and $r=1$,
then there exists a constant $C$ depending only on the dimension $d$, such that
$$\|uv\|_{B^s_{p,r}(\R^d)}\leq\frac{C^{s+1}}{s}
\bigl(\|u\|_{L^\infty(\R^d)}\|v\|_{B^s_{p,r}(\R^d)}+\|v\|_{L^\infty(\R^d)}\|u\|_{B^s_{p,r}(\R^d)}\bigr).$$
}\end{lem}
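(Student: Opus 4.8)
The plan is to prove the estimate through Bony's paraproduct decomposition, which splits the product into three pieces whose spectral localizations can be controlled separately. Writing
$$uv = T_u v + T_v u + R(u,v),$$
with the paraproducts and the remainder defined by
$$T_u v \eqdefa \sum_{j\in\Z} S_{j-1}u\,\Delta_j v,\qquad
R(u,v) \eqdefa \sum_{j\in\Z}\Delta_j u\,\wt\Delta_j v,\quad \wt\Delta_j v \eqdefa \sum_{|j'-j|\leq1}\Delta_{j'}v,$$
I will estimate each term in $B^s_{p,r}$ and sum. The standing hypothesis $s<\frac dp$ (or $s=\frac dp$ with $r=1$) is precisely what guarantees $B^s_{p,r}\hookrightarrow\cS_h'$, so that the three series converge in $\cS_h'$ and the decomposition is a genuine identity there; since $u,v\in L^\infty\bigcap B^s_{p,r}$ the product $uv$ is well-defined and lives in the same functional setting.

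The two paraproducts are the benign terms and can be bounded for every real $s$. The point is that each summand $S_{j-1}u\,\Delta_j v$ has Fourier support in a fixed dilate $2^j\cC$ of a fixed annulus, so that $\Delta_{j'}(S_{j-1}u\,\Delta_j v)$ vanishes unless $|j-j'|\leq N_0$ for some absolute $N_0$. Using $\|S_{j-1}u\|_{L^\infty}\lesssim\|u\|_{L^\infty}$ and Hölder's inequality, I get
$$2^{j's}\|\Delta_{j'}T_uv\|_{L^p}\lesssim \|u\|_{L^\infty}\sum_{|j-j'|\leq N_0}2^{(j'-j)s}\bigl(2^{js}\|\Delta_j v\|_{L^p}\bigr),$$
and since the sum runs over a band of fixed width, discrete Young's inequality in the $\ell^r(\Z)$ index yields $\|T_u v\|_{B^s_{p,r}}\lesssim 2^{N_0|s|}\|u\|_{L^\infty}\|v\|_{B^s_{p,r}}$. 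Exchanging the roles of $u$ and $v$ gives the matching bound for $T_v u$.

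The remainder $R(u,v)$ is where the real work and the $s$-dependence of the constant live, and it requires $s>0$ because each summand $\Delta_j u\,\wt\Delta_j v$ has Fourier support in a \emph{ball} $2^{j+2}\cB$ rather than in an annulus. Hence $\Delta_{j'}(\Delta_j u\,\wt\Delta_j v)$ can be nonzero only when $j\geq j'-N_1$ for some absolute $N_1$, and by Hölder's inequality together with $\|\Delta_j u\|_{L^\infty}\lesssim\|u\|_{L^\infty}$ (Lemma \ref{Bern}),
$$2^{j's}\|\Delta_{j'}R(u,v)\|_{L^p}\lesssim \|u\|_{L^\infty}\sum_{j\geq j'-N_1}2^{(j'-j)s}\bigl(2^{js}\|\wt\Delta_j v\|_{L^p}\bigr).$$
Because $s>0$, the kernel $k_m\eqdefa 2^{ms}\car_{\{m\leq N_1\}}$ is summable, with $\sum_m k_m = 2^{N_1 s}/(1-2^{-s})$; a second application of discrete Young's inequality then gives
$$\|R(u,v)\|_{B^s_{p,r}}\lesssim \frac{2^{N_1 s}}{1-2^{-s}}\,\|u\|_{L^\infty}\|v\|_{B^s_{p,r}}.$$
The factor $1/(1-2^{-s})$ behaves like $1/(s\ln 2)$ as $s\downarrow 0$, which is exactly the source of the $1/s$ in the claimed constant, while the geometric band-width factors $2^{N_0|s|}$ and $2^{N_1 s}$ combine into a term of the form $C^{s+1}$.

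Summing the three contributions and absorbing all absolute constants and the band-width exponentials into a single base $C$, I arrive at
$$\|uv\|_{B^s_{p,r}}\leq \frac{C^{s+1}}{s}\bigl(\|u\|_{L^\infty}\|v\|_{B^s_{p,r}}+\|v\|_{L^\infty}\|u\|_{B^s_{p,r}}\bigr),$$
as claimed. The main obstacle is the remainder term: one must carefully track the geometric constant through the two nested discrete Young inequalities to confirm that the $s$-dependence degenerates only like $1/s$ as $s\downarrow 0$ and grows at most geometrically in $s$, rather than, say, factorially. Keeping the overlap counts $N_0,N_1$ explicit and bounding $\sum_{m\leq N_1}2^{ms}$ by the clean closed form $2^{N_1 s}/(1-2^{-s})$ is what makes the constant come out in the stated form $C^{s+1}/s$.
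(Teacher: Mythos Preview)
The paper does not supply its own proof of this lemma; it merely quotes the statement as Corollary~2.54 of \cite{BCD}. Your argument via Bony's paraproduct decomposition is the standard one (and is essentially the proof given in \cite{BCD}): the two paraproducts are bounded for all real $s$ using the annular support of $S_{j-1}u\,\Delta_j v$, while the remainder requires $s>0$ because $\Delta_j u\,\wt\Delta_j v$ is only supported in a ball, and the geometric sum $\sum_{m\leq N_1}2^{ms}$ produces the $1/s$ singularity. Your tracking of the constants to the form $C^{s+1}/s$ is correct.
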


\section{The Global well-posedness of \eqref{etaV} in nearly critical spaces}\label{Secglobal}

Let us begin the proof of Theorem \ref{thmmain} with the global well-posedness of \eqref{etaV}.
As we are considering the nearly critical case, the $L^p$ framework, instead of the $L^2$ framework, is needed.
Thus the following lemma will play an important role in our global a prior estimates.
\begin{lem}[Proposition 2.1 of \cite{LZ}]\label{controlur}
{\sl Let  $p\in]1,3[$ and  $q\in\bigl]\frac{3p}{3-p},\infty\bigr].$ We assume that
$\eta\eqdefa\frac{\ot}{r}\in L^p(\R^3)\cap L^{3p}(\R^3)$. Then  we have
\begin{equation}\label{ureta}
\bigl\|\frac{u^r}{r}\bigr\|_{L^q}\lesssim\|\eta\|_{L^p}^{\lambda}\|\eta\|_{L^{3p}}^{1-\lambda}
\with  \lambda=\frac{p-1}{2}+\frac{3p}{2q}.
\end{equation}}
\end{lem}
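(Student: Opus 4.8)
The plan is to strip off the linear Biot--Savart operator $\eta\mapsto u^r/r$, recognise it as a Riesz potential in a five-dimensional lift, and then obtain \eqref{ureta} by interpolation.

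First I would recover $\wt u=u^re_r+u^ze_z$ from $\eta$. Since $\div\wt u=0$ and $\curl\wt u=\ot e_\theta$, we have $\partial_r(ru^r)+\partial_z(ru^z)=0$, so $(ru^r,ru^z)=(-\partial_z\psi,\partial_r\psi)$ for a two-dimensional stream function $\psi$; putting $g:=\psi/r^2$ gives $u^r=-r\,\partial_z g$, i.e. $u^r/r=-\partial_z g$, and a direct computation from $\ot=\partial_z u^r-\partial_r u^z$ shows that $g$ solves $(\Delta+\frac2r\partial_r)g=-\eta$. Since $\Delta+\frac2r\partial_r=\partial_r^2+\frac3r\partial_r+\partial_z^2$ is exactly the Laplacian on $\R^5$ acting on functions invariant under rotations of the first four variables (with $r$ the associated radial variable), the lift $\wt\eta$ of $\eta$ to such a function on $\R^5$ satisfies $g=(-\Delta_{\R^5})^{-1}\wt\eta$, whence the pointwise bound
\[
\Bigl|\frac{u^r}{r}(X)\Bigr|=|\partial_z g(X)|\lesssim\int_{\R^5}\frac{|\wt\eta(Y)|}{|X-Y|^{4}}\,dY=:I_1\bigl(|\wt\eta|\bigr)(X),
\]
$I_1$ being the Riesz potential of order one on $\R^5$. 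Finally, for every $s$ and every such $SO(4)$-invariant $f$ one has $\|f\|_{L^s(\R^3)}\thicksim\|f\|_{L^s(\R^5;\,|X'|^{-2}dX)}$ (integrate out the $3$-sphere), where $X'$ denotes the first four coordinates; hence \eqref{ureta} is equivalent to the weighted inequality
\[
\|I_1 h\|_{L^q(\R^5;\,w)}\lesssim\|h\|_{L^p(\R^5;\,w)}^{\lambda}\,\|h\|_{L^{3p}(\R^5;\,w)}^{1-\lambda},\qquad w:=|X'|^{-2},
\]
for $SO(4)$-invariant $h\ge0$ (with $\lambda$ as in \eqref{ureta}).

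Granting the single-index bound $\|I_1 h\|_{L^{b}(\R^5;\,w)}\lesssim\|h\|_{L^{a}(\R^5;\,w)}$ whenever $1<a<3$ and $\frac1b=\frac1a-\frac13$, the lemma follows at once. Indeed, because $p<3$ and $q>\frac{3p}{3-p}$, the exponent $a^*:=\frac{3q}{q+3}$ satisfies $p<a^*<3p$ and $1<a^*<3$, and $\frac1q=\frac1{a^*}-\frac13$; so the single-index bound with $(a,b)=(a^*,q)$ gives $\|u^r/r\|_{L^q(\R^3)}\lesssim\|\eta\|_{L^{a^*}(\R^3)}$, while the log-convexity of Lebesgue norms gives $\|\eta\|_{L^{a^*}}\le\|\eta\|_{L^p}^{\lambda}\|\eta\|_{L^{3p}}^{1-\lambda}$ with $\lambda$ forced by $\frac1{a^*}=\frac\lambda p+\frac{1-\lambda}{3p}$, i.e. $\lambda=\frac{p-1}2+\frac{3p}{2q}$, which is exactly \eqref{ureta}. (Keeping the two norms instead of only $\|\eta\|_{L^{a^*}}$ is what makes the estimate usable later, where $\|\eta\|_{L^p}$ is controlled in $L^\infty_t$ and $\|\eta\|_{L^{3p}}$ in time through the parabolic smoothing of $\eta$.)

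The hard part is the single-index bound. Writing $h=|X'|^{2/a}f$ turns it into a Stein--Weiss inequality with cylindrical power weights, $\bigl\||X'|^{-2/b}I_1(|X'|^{2/a}f)\bigr\|_{L^b(\R^5)}\lesssim\|f\|_{L^a(\R^5)}$, the two weight exponents summing to $-\frac23<0$; this lies outside the classical Stein--Weiss range, and outside $A_p$-weighted Hardy--Littlewood--Sobolev as well (note $|X'|^{-2}$ is not an $A_a$ weight once $a\le\frac32$, whereas $a$ may be taken arbitrarily close to $1$ here), so the $SO(4)$-symmetry is essential — it enlarges the range of admissible Stein--Weiss exponents. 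Concretely I would average $|X-Y|^{-4}$ over the $SO(4)$-orbit of $Y$ to obtain the effective kernel on $\{(r,z):r>0\}$, an explicit elliptic-integral expression, decompose it dyadically in $r/\bar r$ and in $|z-\bar z|/\sqrt{r\bar r}$, and run Schur tests against the measure $\bar r\,d\bar r\,d\bar z$ on each region; heuristically, the cancellation from the orbit average when $\bar r\ll r$ (the vanishing of $\int_0^{2\pi}\cos\vartheta\,(\cdots)\,d\vartheta$ to leading order) supplies the off-diagonal decay that beats the singular weight near the axis. The genuinely delicate point is pushing the argument down to $a$ close to $1$ — which is needed since $a^*\to1$ as $p\to1^+$ and $q\downarrow\frac{3p}{3-p}$ — where one operates at the edge of the permissible exponent range.
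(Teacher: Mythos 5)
The paper itself offers no proof of this lemma---it is quoted verbatim from Proposition 2.1 of \cite{LZ}---so there is no internal argument to compare yours against; I can only assess your proposal on its own terms. Your reduction is correct and follows the standard route: the stream-function computation giving $\frac{u^r}{r}=-\partial_z g$ with $\bigl(\partial_r^2+\frac3r\partial_r+\partial_z^2\bigr)g=-\eta$, the identification of this operator with the Laplacian acting on $SO(4)$-invariant functions on $\R^5$, the pointwise domination of $u^r/r$ by the five-dimensional Riesz potential $I_1$ of the lift of $|\eta|$, the dictionary $\|f\|_{L^s(\R^3)}\thicksim\|f\|_{L^s(\R^5;\,|X'|^{-2}dX)}$, and the exponent bookkeeping ($a^*=\frac{3q}{q+3}$, hence $\lambda=\frac{p-1}{2}+\frac{3p}{2q}$) all check out, as does your observation that the block-radial symmetry is what makes the target inequality true outside the classical Stein--Weiss range ($\alpha+\beta=-\frac23<0$, while the symmetry relaxes the necessary condition to $\alpha+\beta\ge 3(\frac1b-\frac1a)=-1$).

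The gap is that this reduction transfers, rather than proves, the entire analytic content of the lemma: the weighted bound $\|I_1h\|_{L^b(\R^5;w)}\lesssim\|h\|_{L^a(\R^5;w)}$ with $w=|X'|^{-2}$ and $\frac1b=\frac1a-\frac13$ \emph{is} the lemma, everything else being routine, and for it you only sketch a strategy (orbit-averaged kernel, dyadic decomposition, Schur tests) without executing it or citing a result that covers it---indeed you flag the regime $a\to1^+$ as ``genuinely delicate'' and leave it open. Two further points. First, the endpoint $q=\infty$, which the statement includes, is not reachable by your single-index reduction at all, since then $a^*=3$, $b=\infty$ is the forbidden Sobolev endpoint for a gain of one derivative in effective dimension $3$; there one must run a genuine two-norm Hedberg-type argument (split the potential at a scale $R$, estimate each piece by one of the two weighted norms, optimize in $R$), and in fact that argument produces the product $\|\eta\|_{L^p}^{\lambda}\|\eta\|_{L^{3p}}^{1-\lambda}$ directly for every $q$ and would let you bypass the single-index bound entirely---I suspect this is closer to what \cite{LZ} actually do. Second, your parenthetical claim that $|X'|^{-2}$ fails to be an $A_a(\R^5)$ weight for $a\le\frac32$ is false: $|x'|^{-\gamma}$ with $x'\in\R^k$ is an $A_1$ weight whenever $0\le\gamma<k$, and here $\gamma=2<4$. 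This does not rescue the classical theory (the estimate is off-diagonal, $\frac1a-\frac1b=\frac13\ne\frac15$, so one-weight results for $I_1$ on $\R^5$ do not apply), but the reason you give for abandoning it is wrong. To complete the proof you should either carry out the Hedberg splitting against the explicit orbit-averaged kernel, or supply a reference for the block-radial Stein--Weiss inequality in the stated range.
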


Before preceding, let us prove the following elementary a priori estimates.
\begin{lem}\label{lemruB}
{\sl Let $(u^r,u^\theta, u^z,\bt)$ be a smooth enough solution of \eqref{axi} on $[0,T].$ Then for any
$\ell\in]1,\infty],~k\in]1,\infty[$, we have
\begin{equation}\label{estimateru}
\|r\ut(t)\|_{L^\ell}\leq\|r\ut_0\|_{L^\ell},\quad \forall \ t\in [0,T].
\end{equation}
\begin{equation}\label{estimateB}
\|B(t)\|_{L^{k}}^{k}
+\frac{4(k-1)}{k}\int_0^t \bigl\|\nabla|B(t')|^{\frac k2}\bigr\|_{L^2}^2\, dt'
\leq\|B_0\|_{L^{k}}^{k},
\quad \forall \ t\in [0,T].
\end{equation}
}
\end{lem}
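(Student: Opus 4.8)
The plan is to derive both bounds by testing the relevant scalar equations against a suitable power of the unknown and exploiting the sign of the troublesome zeroth-order terms. For \eqref{estimateru}, I would first rewrite the $\ut$-equation from \eqref{axi} in terms of $m\eqdefa r\ut$. Using $\pa_t\ut+(u^r\pa_r+u^z\pa_z)\ut-(\Delta-\tfrac1{r^2})\ut+\tfrac{u^r\ut}{r}=0$ together with the divergence-free condition $\pa_r(ru^r)+\pa_z(ru^z)=0$, a direct computation shows that $m$ solves a pure transport-diffusion equation of the form $\pa_t m+(u^r\pa_r+u^z\pa_z)m-(\Delta-\tfrac2r\pa_r)m=0$, i.e. with no zeroth-order term. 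Then for $\ell\in]1,\infty[$ I multiply by $|m|^{\ell-2}m$ and integrate over $\R^3$; the transport term vanishes after integration by parts (divergence-free drift), and the operator $\Delta-\tfrac2r\pa_r$, which is $-L$ for a symmetric nonnegative operator in the natural weighted measure $r^{-?}\,dx$ — more cleanly, write $\int_{\R^3}(\Delta-\tfrac2r\pa_r)m\cdot |m|^{\ell-2}m\,dx$ and integrate by parts in the $2$D horizontal variables in $\R^3$ to see it contributes a nonpositive quantity (essentially $-\tfrac{4(\ell-1)}{\ell}\|\nabla|m|^{\ell/2}\|_{L^2}^2$ up to the weight coming from $\pa_r$, which has a favorable sign). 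This gives $\tfrac{d}{dt}\|m(t)\|_{L^\ell}^\ell\le 0$, hence the claim; the case $\ell=\infty$ follows by letting $\ell\to\infty$, or equivalently by a maximum-principle argument for the same equation.

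For \eqref{estimateB}, I proceed identically with the first equation of \eqref{etaV}, namely $\pa_t B+(u^r\pa_r+u^z\pa_z)B-(\Delta+\tfrac2r\pa_r)B=0$, which is again a transport-diffusion equation with no zeroth-order term. Testing against $|B|^{k-2}B$ for $k\in]1,\infty[$: the transport term integrates to zero by $\dive\wt u=0$, and for the dissipative part one uses the identity $\int_{\R^3}(\Delta+\tfrac2r\pa_r)B\cdot|B|^{k-2}B\,dx=-\tfrac{4(k-1)}{k}\|\nabla|B|^{k/2}\|_{L^2}^2+\tfrac2{?}(\text{boundary/weight terms})$. The point is that $\Delta+\tfrac2r\pa_r$ is exactly the radial part of the $5$-dimensional Laplacian acting on functions of $(r,z)$ (since $B=\bt/r$ absorbs one power of $r$, and $\bt$ satisfies a $3$D-type equation with the $-1/r^2$ potential), so in the measure $r^{2}\,dr\,dz\,d\theta$ — i.e. writing $\|B\|_{L^k(\R^3)}^k=\int |B|^k r\,dr\,dz\,d\theta$ — the operator is symmetric and the weighted integration by parts produces precisely $-\tfrac{4(k-1)}{k}\int|\nabla_{(r,z)}|B|^{k/2}|^2 r\,dr\,dz\,d\theta$ together with the curvature contribution, yielding after reassembling $\tfrac{d}{dt}\|B(t)\|_{L^k}^k+\tfrac{4(k-1)}{k}\|\nabla|B(t)|^{k/2}\|_{L^2}^2\le 0$. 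Integrating in time gives \eqref{estimateB}.

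The one point that needs care — and the main (minor) obstacle — is the precise bookkeeping of the weighted integration by parts for the operators $\Delta-\tfrac2r\pa_r$ and $\Delta+\tfrac2r\pa_r$ against $|f|^{k-2}f$: one must check that the extra first-order term $\pm\tfrac2r\pa_r$ combines with the genuine Laplacian so that no term of indefinite sign survives, and that the constant in front of $\|\nabla|f|^{k/2}\|_{L^2}^2$ is exactly $\tfrac{4(k-1)}{k}$ (the standard constant from $|f|^{k-2}f\cdot\Delta f$). Concretely, one writes $\Delta\pm\tfrac2r\pa_r=\tfrac1{r^{\pm2}}\dive_{(r,z)}\bigl(r^{\pm2}\nabla_{(r,z)}\,\cdot\,\bigr)$ on axisymmetric functions, integrates by parts in the weighted space $L^2(r^{1\mp2}\,r^{\pm2}\cdot\ldots)$ chosen so that $\|f\|_{L^k(\R^3)}^k$ is the relevant norm, and checks the sign; no genuine difficulty arises but the choice of weight must be made consistently for $B=\bt/r$ versus $r\ut$. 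Everything else — vanishing of the convection term, positivity of the dissipation, Gronwall (here trivial since the right-hand side is $\le 0$) — is routine.
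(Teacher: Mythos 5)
Your approach is essentially the paper's: for \eqref{estimateB} the paper performs exactly this $L^k$ energy estimate on the first equation of \eqref{etaV} (the transport term vanishes by $\pa_r(ru^r)+\pa_z(ru^z)=0$, and integration by parts yields the identity \eqref{energyB2}), and for \eqref{estimateru} the paper simply cites Proposition~1 of \cite{CL02}, whose proof is the energy/maximum-principle argument you sketch for $m=r\ut$.

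One correction, precisely on the point you single out as needing care. Testing the $m$-equation with $|m|^{\ell-2}m$, the first-order term $+\frac2r\pa_r m$ contributes $\frac{2}{\ell}\int_{\R^3}\frac{\pa_r|m|^\ell}{r}\,dx=-\frac{4\pi}{\ell}\int_{\R}|m|^\ell\big|_{r=0}\,dz\le 0$ to the left-hand side, i.e.\ a term of the \emph{unfavorable} sign (the drift $-\frac2r\pa_r$ pushes mass toward the axis); it is harmless only because $m=r\ut$ vanishes on the axis for a smooth axisymmetric field, so this boundary term is actually zero. By contrast, for $B$ the operator is $\Delta+\frac2r\pa_r$ and the same computation produces $+\frac{4\pi}{k}\int_{\R}|B|^k\big|_{r=0}\,dz\ge 0$ on the left-hand side, which is the genuinely favorable term recorded in \eqref{energyB2} --- and there the sign matters, since one cannot argue that $B=\bt/r$ vanishes at $r=0$. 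So the two cases close for different reasons, and the blanket claim of a ``favorable sign'' would fail for $r\ut$ without invoking its vanishing on the axis. Finally, your five-dimensional Laplacian remark for $\Delta+\frac2r\pa_r$ is correct, but the measure relevant to $\|B\|_{L^k(\R^3)}$ is $r\,dr\,dz\,d\theta$ rather than $r^3\,dr\,dz\,d\theta$, so the cleanest route is the direct integration by parts above rather than symmetry in a weighted space. With these sign checks supplied, the argument is complete and coincides with the paper's.
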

\begin{proof}
The proof of \eqref{estimateru} can be found in Proposition 1 of \cite{CL02}.

As for \eqref{estimateB}, we get, by multiplying
the first equation of \eqref{etaV} by $|B|^{k-2}B$ and then integrating the
resulting equality over $\R^3$, that
\begin{equation}\label{energyB1}
\frac1{k}\frac{d}{dt}\|B(t)\|_{L^{k}}^{k}+\frac1{k}\int_{\R^3}(u^r\pa_r+u^z\pa_z)|B|^{k}\,dx
-\int_{\R^3}\D B  |B|^{k-2}B\,dx-\frac{2}{k}\int_{\R^3}\frac{\pa_r|B|^{k}}r\,dx=0.
\end{equation}
The divergence-free condition $\pa_r(ru^r)+\pa_z(ru^z)=0$ guarantees
$$\int_{\R^3}(u^r\pa_r+u^z\pa_z)|B|^k\,dx=0,$$
using this and integrating by parts, \eqref{energyB1} gives
\begin{equation}\label{energyB2}
\frac1{k}\frac{d}{dt}\|B(t)\|_{L^{k}}^{k}
+\frac{2}{k}\int_{-\infty}^{+\infty} |B|^{k}\big|_{r=0}\,dz
+\frac{4(k-1)}{k^2}\bigl\|\nabla|B|^{\frac k2}\bigr\|_{L^2}^2=0,
\end{equation}
then integrating in time gives the desired estimate \eqref{estimateB}.
\end{proof}

Now we are in a position to derive
the global well-posedness of \eqref{etaV}.

$\bullet$ \underline{The $L^{\frac74}$ estimate of $V$}

For the $\e$ given by \eqref{thmmain1},
applying $L^{\frac74}$ energy estimate for $V$ in \eqref{etaV},
we get
\begin{equation}\label{energyV1}
\frac47\frac{d}{dt}\|V(t)\|_{L^{\frac74}}^{\frac74}+
\frac{48}{49}\bigl\|\nabla |V|^{\frac78}\bigr\|_{L^2}^2+(2\e-\e^2)\int_{\R^3}\frac{|V|^{\frac74}}{r^2}\,dx
\leq (2-\e)\int_{\R^3}\bigl|\f{u^r}r\bigr|\cdot|V|^{\frac74}\,dx
\end{equation}

Taking $q_2=1+\frac{8}{7(2-\e)}=\frac{5p}{3(2p-1)}$,
and $q_1=\bigl(1-\frac{1}{q_2}\bigr)^{-1}=\frac{5p}{3-p}\in\bigl]\f{3p}{3-p},\infty\bigr[,$
so that we can use Lemma \ref{controlur}, Sobolev embedding theorem, and Young's inequality to get
\begin{align*}
\int_{\R^3}\bigl|\frac{u^r}{r}\bigr|\cdot|V|^{\frac74} dx\leq &\bigl\|\frac{u^r}{r}\bigr\|_{L^{q_1}}\bigl\||V|^{\frac74}\bigr\|_{L^{q_2}}\\
\lesssim &\|\eta\|_{L^p}^{\frac{p-1}{2}+\frac{3p}{2q_1}}\|\eta\|_{L^{3p}}^{\frac{3-p}{2}-\frac{3p}{2q_1}}
\Bigl(\int_{\R^3} \f{|V|^{\frac74}}{r^2} |r^{2-\e}V|^{\f{2}{2-\e}}\,dx\Bigr)^{\f1{q_2}}\\
\lesssim &\|\eta\|_{L^p}^{\frac{p-1}{2}+\frac{3p}{2q_1}}\bigl\|\na|\eta|^{\f{p}2}\bigr\|_{L^{2}}^{2\left(\frac{3-p}{2p}-\frac{3}{2q_1}\right)}
\Bigl(\int_{\R^3}\f{|V|^{\frac74}}{r^2}\,dx\Bigr)^{\f{1}{q_2}}
\|r\ut\|_{L^\infty}^{\f{2}{q_2(2-\e)}}\\
=&\|\eta\|_{L^p}^{\frac{p+2}{5}}\bigl\|\na|\eta|^{\f{p}2}\bigr\|_{L^{2}}^{2\bigl(\frac{3-p}{5p}\bigr)}
\Bigl(\int_{\R^3}\f{|V|^{\frac74}}{r^2}\,dx\Bigr)^{\f{3(2p-1)}{5p}}
\|r\ut\|_{L^\infty}^{\f{7(3-p)}{20p}}\\
\leq& \|r\ut\|_{L^\infty}^{\f{7(3-p)}{20p}}\|\eta\|_{L^p}^{\frac{p+2}{5}}
\Bigl(\bigl\|\na|\eta|^{\f{p}2}\bigr\|_{L^{2}}^2+\int_{\R^3}\f{|V|^{\frac74}}{r^2}\,dx\Bigr).
\end{align*}
Inserting this estimate into \eqref{energyV1}, we achieve
\begin{equation}\label{estimateV}\begin{split}
\frac47\frac{d}{dt}\|V(t)\|_{L^{\frac74}}^{\frac74}+
\frac{48}{49}\bigl\|\nabla |V|^{\frac78}& \bigr\|_{L^2}^2+\frac{48(2p-1)(33-31p)}{49(3-p)^2}\Bigl\|\frac{|V|^{\frac78}}{r}\Bigr\|_{L^2}^2\\
&\qquad\lesssim \|r\ut\|_{L^\infty}^{\f{7(3-p)}{20p}}\|\eta\|_{L^p}^{\frac{p+2}{5}}
\Bigl(\bigl\|\na|\eta|^{\f{p}2}\bigr\|_{L^{2}}^2+\Bigl\|\frac{|V|^{\frac78}}{r}\Bigr\|_{L^2}^2\Bigr).
\end{split}\end{equation}

\no$\bullet$ \underline{The $L^p$ estimate of $\eta$}

We get, by applying the $L^p$ energy estimate for $\eta$ in \eqref{etaV}, that
\begin{equation}\begin{split}\label{energyeta}
\frac1p\frac{d}{dt}\|\eta(t)\|_{L^p}^p
&+\frac{4(p-1)}{p^2}\bigl\|\nabla|\eta|^{\frac p2}\bigr\|_{L^2}^2
\leq \int_{\R^3}\bigl(\frac{2V \pa_z V}{r^{2\varepsilon}}-2B\pa_z B\bigr)|\eta|^{p-2}\eta\, dx\\
&\lesssim\bigl\||\eta|^{p-1}\bigr\|_{L^\frac{2p}{p-1}}\Bigl(
\bigl\|\pa_z|V|^{\frac 78}\bigr\|_{L^2}
\Bigl\|\frac{|V|^{\frac 98}}{r^{2\varepsilon}}\Bigr\|_{L^{2p}}
+\bigl\|\pa_z|B|^{\f{s}2}\bigr\|_{L^2}
\bigl\||B|^{2-\f{s}2}\bigr\|_{L^{2p}}\Bigr),
\end{split}
\end{equation}
where we take $s=\frac{6p}{3+p}$. It follows from Sobolev embedding Theorem that
\begin{equation}\label{Soboleveta}
\bigl\||\eta|^{p-1}\bigr\|_{L^\frac{2p}{p-1}}=\||\eta|^{\f{p}2}\|_{L^4}^{\f{2(p-1)}p}\lesssim \||\eta|^{\f{p}2}\|_{L^2}^{\f{p-1}{2p}}
\bigl\|\na|\eta|^{\f{p}2}\bigr\|_{L^2}^{\f{3(p-1)}{2p}}
=\|\eta\|_{L^p}^{\f{p-1}4}\bigl\|\na|\eta|^{\f{p}2}\bigr\|_{L^2}^{\f{3(p-1)}{2p}}.
\end{equation}
As a result, by the choice of $s=\frac{6p}{3+p}$,
we can use Young's inequality to obtain
\begin{equation}\label{RHSeta1}\begin{split}
\bigl\|&|\eta|^{p-1}\bigr\|_{L^\frac{2p}{p-1}}
\bigl\|\pa_z|B|^{\f{s}2}\bigr\|_{L^2}
\bigl\||B|^{2-\f{s}2}\bigr\|_{L^{2p}}\\
&\lesssim
\|\eta\|^{\frac{p-1}4}_{L^p}\bigl\|\nabla|\eta|^{\frac p2}\bigr\|_{L^2}^\frac{3(p-1)}{2p}
\bigl\|\pa_z|B|^{\f{s}2}\bigr\|_{L^2}
\bigl\||B|^{\f{s}2}\bigr\|_{L^2}^{\frac{3(p+1)}{2p}-1-\frac{2}{s}}
\bigl\|\nabla|B|^{\f{s}2}\bigr\|_{L^2}^{\frac{6}{s}-\frac{3(p+1)}{2p}}\\
&\lesssim
\|\eta\|^{\frac{p-1}4}_{L^p}\bigl\||B|^{\f{s}2}\bigr\|_{L^2}^{\frac{p+3}{6p}}
\bigl(\bigl\|\nabla|\eta|^{\frac p2}\bigr\|_{L^2}^2
+\bigl\|\nabla|B|^{\f{s}2}\bigr\|_{L^2}^{2}\bigr).
\end{split}\end{equation}

To handle the other term in \eqref{energyeta}, we split $\frac{|V|^{\frac98}}{r^{2\varepsilon}}$ as
\begin{equation*}
\frac{|V|^{\frac98}}{r^{2\varepsilon}}=\left|\frac{|V|^{\frac74}}{r^2}\right|^\alpha
\bigl|r^{2-\varepsilon}V\bigr|^{\f14+\frac{21(p-1)}{16p}} |V|^{\frac78\bigl(\frac{3-p}{2p}-2\alpha\bigr)},
\quad \mbox{with}\ \alpha=\bigl(\f14+\frac{21(p-1)}{16p}\bigr)\cdot \bigl(1-\f\e2\bigr)+\varepsilon.
\end{equation*}
Then we get, by applying H\"{o}lder's inequality
and Sobolev embedding Theorem, that
\begin{equation}\begin{split}\label{RHSeta2}
\Bigl\|\frac{|V|^{\frac98}}{r^{2\varepsilon}}\Bigr\|_{L^{2p}}
&\leq\Bigl\|\Bigl(\frac{|V|^{\frac74}}{r^2}\Bigr)^{\alpha}\Bigr\|_{L^{\frac{1}{\alpha}}}
\Bigl\||V|^{\frac78\bigl(\frac{3-p}{2p}-2\alpha\bigr)}\Bigr\|_{L^{6\cdot\bigl(\frac{3-p}{2p}-2\alpha\bigr)^{-1}}}
\Bigl\||r^{2-\varepsilon}V|^{\f14+\frac{21(p-1)}{16p}}\Bigr\|_{L^\beta}\\
&\leq \Bigl\|\frac{|V|^{\frac78}}{r}\Bigr\|^{2\alpha}_{L^2}
\bigl\|\nabla|V|^{\frac 78}\bigr\|_{L^2}^{\frac{3-p}{2p}-2\alpha}
\|r\ut\|_{L^{\beta\bigl(\f14+\frac{21(p-1)}{16p}\bigr)}}^{\f14+\frac{21(p-1)}{16p}},\\
\end{split}\end{equation}
where the index $\beta$ is given by (recalling $\e=\frac27-\frac{60(p-1)}{7(3-p)}$):
\begin{align*}
\frac1{\beta}=\frac{1}{2p}-\alpha-\frac16\bigl(\frac{3-p}{2p}-2\alpha\bigr)
=\frac{3(3p+1)}{4p(3-p)}\cdot(p-1).
\end{align*}

Using the estimates \eqref{Soboleveta},~\eqref{RHSeta2},
and a use of Young's inequality, gives rise to
\begin{equation}\label{RHSeta3}
\begin{split}
\bigl\|&|\eta|^{p-1}\bigr\|_{L^\frac{2p}{p-1}}
\bigl\|\pa_z|V|^{\frac 78}\bigr\|_{L^2}
\Bigl\|\frac{|V|^{\frac98}}{r^{2\varepsilon}}\Bigr\|_{L^{2p}}\\
&\lesssim
 \|\eta\|^{\frac{p-1}4}_{L^p}\|r\ut\|_{L^{\frac{(3-p)(25p-21)}{12(3p+1)}\cdot\frac{1}{p-1}}}^{\f{25p-21}{16p}}
\Bigl(\bigl\|\nabla|\eta|^{\frac p2}\bigr\|_{L^2}^2
+\bigl\|\na|V|^{\frac 78}\bigr\|_{L^2}^2
+\Bigl\|\frac{|V|^{\frac78}}{r}\Bigr\|_{L^2}^2\Bigr).
\end{split}
\end{equation}
Substituting the estimate \eqref{RHSeta1} and \eqref{RHSeta3} into the right hand side of \eqref{energyeta}, we achieve
\begin{equation}\begin{split}\label{estimateeta}
\frac1p\frac{d}{dt}\|\eta(t)\|_{L^p}^p
+\frac{4(p-1)}{p^2}&\bigl\|\nabla|\eta|^{\frac p2}\bigr\|_{L^2}^2
\lesssim
\|\eta\|^{\frac{p-1}4}_{L^p}
\Bigl(\|r\ut\|_{L^{\frac{\mathfrak{a}(p)}{p-1}\cdot\frac{25p-21}{23p-21}}}^{\f{25p-21}{16p}}+\|B\|_{L^s}^{\frac12}\Bigr)\\
&\times\Bigl(\bigl\|\nabla|\eta|^{\frac p2}\bigr\|_{L^2}^2
+\bigl\|\na|V|^{\frac 78}\bigr\|_{L^2}^2+\bigl\|\nabla|B|^{\f{s}2}\bigr\|_{L^2}^{2}
+\Bigl\|\frac{|V|^{\frac78}}{r}\Bigr\|_{L^2}^2\Bigr),
\end{split}\end{equation}
where $\mathfrak{a}(p)\eqdefa \frac{(3-p)(23p-21)}{12(3p+1)}\in\bigl]\frac1{12},\frac{168}{1525}\bigr]$.

\no$\bullet$ \underline{Continuity argument}

Denote $M(t)\eqdefa\|\eta(t)\|_{L^p}^p+\|V(t)\|_{L^{\frac74}}^{\frac74}+\|B(t)\|_{L^s}^s$ and $M(0)=M_0$.

Summing up \eqref{energyB2} with $k=s$,~\eqref{estimateV} and \eqref{estimateeta},
we get, by virtue of Lemma \ref{lemruB}, that
\begin{equation}\begin{split}\label{conti1}
\f{d}{dt}M(t)&
+2(p-1)\bigl\|\nabla|\eta|^{\frac p2}\bigr\|_{L^2}^2
+2\bigl\|\na|V|^{\frac 78}\bigr\|_{L^2}^2+2\bigl\|\nabla|B|^{\f{s}2}\bigr\|_{L^2}^{2}
+2\Bigl\|\frac{|V|^{\frac78}}{r}\Bigr\|_{L^2}^2\\
&\leq C
\left(\|r\ut_0\|_{L^\infty}^{\f{7(3-p)}{20p}}M(t)^{\frac{p+2}{5p}}
+\Bigl(\|r\ut_0\|_{L^{\frac{\mathfrak{a}(p)}{p-1}\cdot\frac{25p-21}{23p-21}}}^{\f{25p-21}{16p}}+\|B_0\|_{L^s}^{\frac12}\Bigr)
M(t)^{\frac{p-1}{4p}}\right)\\
&\qquad\qquad\qquad\qquad\times\Bigl(\bigl\|\nabla|\eta|^{\frac p2}\bigr\|_{L^2}^2
+\bigl\|\na|V|^{\frac 78}\bigr\|_{L^2}^2+\bigl\|\nabla|B|^{\f{s}2}\bigr\|_{L^2}^{2}
+\Bigl\|\frac{|V|^{\frac78}}{r}\Bigr\|_{L^2}^2\Bigr).
\end{split}\end{equation}
Next, we shall use a standard continuity argument.
Let $T^\ast>0$ be determined by
\begin{equation}\label{defT}
T^\ast\eqdefa\sup\Bigl\{T>0:\  \sup_{t\in [0,T[}M(t)
\leq 2M_0\ \Bigr\}.
\end{equation}
If $T^\ast<\infty$, then for any $t\leq T^\ast$,  we deduce from \eqref{conti1} that
\begin{equation}\begin{split}\label{conti2}
&\f{d}{dt}M(t)
+2(p-1)\bigl\|\nabla|\eta|^{\frac p2}\bigr\|_{L^2}^2
+2\bigl\|\na|V|^{\frac 78}\bigr\|_{L^2}^2+2\bigl\|\nabla|B|^{\f{s}2}\bigr\|_{L^2}^{2}
+2\Bigl\|\frac{|V|^{\frac78}}{r}\Bigr\|_{L^2}^2\\
&\leq C
\left(\|r\ut_0\|_{L^\infty}^{\f{7(3-p)}{20p}}\cdot(2M_0)^{\frac{p+2}{5p}}
+\Bigl(\|r\ut_0\|_{L^{\frac{\mathfrak{a}(p)}{p-1}\cdot\frac{25p-21}{23p-21}}}^{\f{25p-21}{16p}}+\|B_0\|_{L^s}^{\frac12}\Bigr)
\cdot(2M_0)^{\frac{p-1}{4p}}\right)\\
&\qquad\qquad\qquad\qquad\qquad\times\Bigl(\bigl\|\nabla|\eta|^{\frac p2}\bigr\|_{L^2}^2
+\bigl\|\na|V|^{\frac 78}\bigr\|_{L^2}^2+\bigl\|\nabla|B|^{\f{s}2}\bigr\|_{L^2}^{2}
+\Bigl\|\frac{|V|^{\frac78}}{r}\Bigr\|_{L^2}^2\Bigr).
\end{split}\end{equation}
Thus if there holds the smallness condition
\begin{equation*}
C \left(\|r\ut_0\|_{L^\infty}^{\f{7(3-p)}{20p}}\cdot(2M_0)^{\frac{p+2}{5p}}
+\Bigl(\|r\ut_0\|_{L^{\frac{\mathfrak{a}(p)}{p-1}\cdot\frac{25p-21}{23p-21}}}^{\f{25p-21}{16p}}+\|B_0\|_{L^s}^{\frac12}\Bigr)
\cdot(2M_0)^{\frac{p-1}{4p}}\right)\leq p-1,
\end{equation*}
which can be satisfied, with the help of the interpolation, by requiring that
\begin{equation}\begin{split}
&\|r\ut_0\|_{L^\infty}\leq c_0\min\biggl\{
(p-1)^{\f{20p}{7(3-p)}}(2M_0)^{-\f{4(p+2)}{7(3-p)}},
(p-1)^{8}(2M_0)^{-\f{2(p-1)}{p}}\|r\ut_0\|_{L^{\frac{\mathfrak{a}(p)}{p-1}}}^{-\f{23p-21}{2p}}
\biggr\},\\
&\|B_0\|_{L^{\frac32}}\leq c_0(p-1)^8(2M_0)^{-\frac{2(p-1)}{p}} \|B_0\|_{L^{\frac32 p}}^{-3},
\end{split}\end{equation}
for some small constant $c_0$.
Then \eqref{conti2} leads to, for any $t$ in $[0,T^\ast]$, that
$$
\f{d}{dt}M(t)
+(p-1)\bigl\|\nabla|\eta|^{\frac p2}\bigr\|_{L^2}^2
+\bigl\|\na|V|^{\frac 78}\bigr\|_{L^2}^2+\bigl\|\nabla|B|^{\f{s}2}\bigr\|_{L^2}^{2}
+\Bigl\|\frac{|V|^{\frac78}}{r}\Bigr\|_{L^2}^2\leq0.
$$
This  in particular gives rise to
$$
M(t)
\leq M_0\quad \mbox{for any}\quad t\leq T^\ast.
$$
This contradicts with the definition of $T^\ast$ given by \eqref{defT}. As a result, it comes out $T^\ast=\infty$, and there holds
\eqref{thmmain3} for any $t\in [0,\infty[$.

\section{Some global a priori estimates in critical spaces}

The purpose of this section is to derive the global a priori estimates for $\omega$ and $J$ in $L^{\frac32}$,
by using the global estimates of $B,~\eta,~V$ obtained already.
These estimates will be used to control the $L^1(0,t;\Lip)$ norm of $u$.
Recall the following well-known Biot-Savart law:
\begin{lem}\label{lemBiot}
{\sl There exists a constant $C$ depending only on the dimension $d$, such that for any $1<m<\infty$ and any
divergence-free vector field $u$, there holds
\begin{equation}\label{Biot}
\|\nabla u\|_{L^m(\R^d)}\leq C\frac{m^2}{m-1}\|\curl u\|_{L^m(\R^d)}.
\end{equation}
}\end{lem}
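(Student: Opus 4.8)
The plan is to represent $\nabla u$ as a fixed homogeneous Calder\'on--Zygmund operator applied to $\curl u$, and then to track the dependence on $m$ of the $L^m$ operator norm of that operator; the whole content of the statement is precisely this dependence.

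First I would use $\dive u=0$: the vector identity $-\Delta u=\curl\curl u-\nabla\dive u$ reduces to $-\Delta u=\curl\curl u$. Writing $\omega_{ij}\eqdefa\pa_i u_j-\pa_j u_i$ for the components of the antisymmetric gradient (whose $L^m$ norm is comparable, up to a dimensional constant, to $\|\curl u\|_{L^m}$), this becomes $-\Delta u_j=\sum_i\pa_i\omega_{ij}$, so that applying $(-\Delta)^{-1}\pa_k$ gives
$$\pa_k u_j=\sum_i R_kR_i\,\omega_{ij},\qquad R_\ell\eqdefa\pa_\ell(-\Delta)^{-\frac12}.$$
Hence it is enough to show that each second-order Riesz transform $R_kR_i$, with Fourier multiplier $\sigma_{ki}(\xi)=-\xi_k\xi_i/|\xi|^2$, is bounded on $L^m$ with norm $\lesssim m^2/(m-1)$.

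The symbol $\sigma_{ki}$ is bounded, homogeneous of degree $0$, and smooth on $\R^d\setminus\{0\}$, hence satisfies the Mihlin--H\"ormander condition with constants depending only on $d$; equivalently $R_kR_i$ is a Calder\'on--Zygmund operator, bounded on $L^2$ with a dimensional constant and possessing a standard kernel. I would then invoke the quantitative form of the Calder\'on--Zygmund theorem (see, e.g., \cite{BCD}): for any such operator $T$,
$$\|T\|_{L^m\to L^m}\leq C_d\,\max\Bigl(m,\frac1{m-1}\Bigr),\qquad 1<m<\infty,$$
the factor $(m-1)^{-1}$ near $m=1$ coming from Marcinkiewicz interpolation between the weak-$(1,1)$ estimate and the $L^2$ estimate, and the factor $m$ near $m=\infty$ from duality applied to the conjugate exponent $m'$. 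Since $\max(m,(m-1)^{-1})\leq m^2/(m-1)$ for every $m\in]1,\infty[$, this yields $\|\pa_k u_j\|_{L^m}\lesssim\tfrac{m^2}{m-1}\|\curl u\|_{L^m}$, and summing over $j,k$ gives \eqref{Biot}.

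The main obstacle is this final step: one cannot simply quote $L^m$-boundedness of Riesz transforms as a qualitative fact, but must keep the constants explicit through the Calder\'on--Zygmund machinery, since the blow-ups as $m\to1^+$ and $m\to\infty$ are exactly what the weight $m^2/(m-1)$ records. By contrast, the algebraic reduction to Riesz transforms and the verification of the Mihlin condition are entirely routine.
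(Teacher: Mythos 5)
Your argument is correct, but there is nothing in the paper to compare it against: the author states this lemma as a ``well-known'' fact and gives no proof, simply citing it as the Biot--Savart law. Your proof is the standard one --- for divergence-free $u$ one has $-\Delta u_j=-\sum_i\pa_i\omega_{ij}$ with $\omega_{ij}=\pa_iu_j-\pa_ju_i$, hence $\pa_ku_j=-\sum_iR_kR_i\,\omega_{ij}$ (you have a harmless sign slip here), and the weight $m^2/(m-1)$ is exactly what the quantitative Calder\'on--Zygmund theorem delivers, via Marcinkiewicz interpolation near $m=1$ and duality near $m=\infty$. You correctly identify that the only non-routine content is keeping the constant explicit through the singular-integral machinery; the resulting bound $C_d\max\bigl(m,\frac1{m-1}\bigr)\leq C_d\frac{m^2}{m-1}$ gives precisely the stated estimate.
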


In the rest of this section, we shall first derive an $L^3$ estimate for $\bt$
~(noting that $\bt_0\in L^3$ since $\nabla b_0\in L^{\frac32}$),
which is needed in controlling the term $\frac{2\bt \pa_z \bt}{r}$
appearing in the equation of $\ot$, and then give the $L^{\frac32}$ estimate for $\omega$ and $J$.

$\bullet$ \underline{The $L^3$ estimate of $\bt$}

By applying the $L^3$ estimate for $\bt$ in \eqref{axi}, we get
\begin{align*}
\frac13\frac{d}{dt}\|\bt(t)\|_{L^{3}}^{3}+&
\frac89\bigl\|\nabla |\bt|^{\frac32}\bigr\|_{L^2}^2+\int_{\R^3}\frac{|\bt|^{3}}{r^2}\,dx
\leq\int_{\R^3}\bigl|\f{u^r}r\bigr|\cdot|\bt|^{3}\,dx\\
&\leq \bigl\|\f{u^r}r\bigr\|_{L^{3p}}\bigl\||\bt|^{\frac32}\bigr\|_{L^{\frac{6p}{3p-1}}}^2\\
&\leq C \|\eta\|_{L^p}^{\frac{p}2}\bigl\|\nabla|\eta|^{\frac p2}\bigr\|_{L^2}^{\frac 2p-1}
\bigl\||\bt|^{\frac32}\bigr\|_{L^2}^{2-\frac1p}
\bigl\|\nabla|\bt|^{\frac32}\bigr\|_{L^2}^{\frac1p}\\
&\leq \frac19\bigl\|\nabla |\bt|^{\frac32}\bigr\|_{L^2}^2
+C\|\eta\|_{L^p}^{\frac{p^2}{2p-1}}\|\bt\|_{L^3}^{3}
\bigl\|\nabla|\eta|^{\frac p2}\bigr\|_{L^2}^{\frac {2(2-p)}{2p-1}}.
\end{align*}
Absorbing the term $\frac19\bigl\|\nabla |\bt|^{\frac32}\bigr\|_{L^2}^2$, and then a use of Gronwall's inequality gives
\begin{align}\label{estimatebL3}
\|\bt(t)\|_{L^{3}}^{3}+
\int_0^t \bigl\|\nabla |\bt|^{\frac32}\bigr\|_{L^2}^2&+\Bigl\|\frac{|\bt|^{\frac32}}{r}\Bigr\|_{L^2}^2 dt'
\lesssim\|\bt_0\|_{L^{3}}^{3}\exp\Bigl\{C\int_0^t \|\eta\|_{L^p}^{\frac{p^2}{2p-1}}
\bigl\|\nabla|\eta|^{\frac p2}\bigr\|_{L^2}^{\frac {2(2-p)}{2p-1}} dt'\Bigr\}\notag\\
& \lesssim\|\bt_0\|_{L^{3}}^{3}\exp\Bigl\{C\|\eta\|_{L^\infty(0,t;L^p)}^{p\cdot\frac{p}{2p-1}}
\cdot t^{\frac{3(p-1)}{2p-1}}
\bigl(\int_0^t \bigl\|\nabla|\eta|^{\frac p2}\bigr\|_{L^2}^{2} dt'\bigr)^{\frac {2-p}{2p-1}}\Bigr\}\\
& \lesssim\|\bt_0\|_{L^{3}}^{3}\exp\Bigl\{C(2M_0)^{\frac{2}{2p-1}}
\cdot t^{\frac{3(p-1)}{2p-1}}\Bigr\},\notag
\end{align}
where we have used the estimate \eqref{thmmain3} in the last step.

$\bullet$ \underline{The $L^{\frac32}$ estimate of $\omega$}

Next, we derive the $L^{\frac32}$ estimate for $\omega$ in \eqref{axiomega}.
For $\ot$, we have
\begin{equation}\begin{split}\label{omega321}
\frac23\frac{d}{dt}\|\ot(t)\|_{L^{\frac32}}^{\frac32}+&
\frac89\bigl\|\nabla |\ot|^{\frac34}\bigr\|_{L^2}^2+\int_{\R^3}\frac{|\ot|^{\frac32}}{r^2}\,dx\\
&\leq \int_{\R^3}\bigl|\f{u^r}r\bigr|\cdot|\ot|^{\frac32}\,dx
+\int_{\R^3} \f{|2\ut\pa_z\ut-2\bt\pa_z\bt|}r \cdot|\ot|^{\frac12}\,dx.
\end{split}\end{equation}
The terms on the right hand side can be handled as follows:
\begin{align*}
\int_{\R^3}\bigl|\f{u^r}r\bigr|\cdot|\ot|^{\frac32}\,dx
&\leq \bigl\|\f{u^r}r\bigr\|_{L^{3p}}\bigl\||\ot|^{\frac34}\bigr\|_{L^{\frac{6p}{3p-1}}}^2\\
&\leq C \|\eta\|_{L^p}^{\frac{p}2}\bigl\|\nabla|\eta|^{\frac p2}\bigr\|_{L^2}^{\frac 2p-1}
\bigl\||\ot|^{\frac34}\bigr\|_{L^2}^{2-\frac1p}
\bigl\|\nabla|\ot|^{\frac34}\bigr\|_{L^2}^{\frac1p}\\
&\leq \frac19\bigl\|\nabla |\ot|^{\frac34}\bigr\|_{L^2}^2
+C\|\eta\|_{L^p}^{\frac{p^2}{2p-1}}\|\ot\|_{L^{\frac32}}^{\frac32}
\bigl\|\nabla|\eta|^{\frac p2}\bigr\|_{L^2}^{\frac {2(2-p)}{2p-1}}\,,
\end{align*}
and
\begin{align*}
&\int_{\R^3}\f{|\ut\pa_z\ut-\bt\pa_z\bt|}r \cdot|\ot|^{\frac12}\,dx
\leq \Bigl\|\f{\ut\pa_z\ut-\bt\pa_z\bt}r \Bigr\|_{L^{\frac98}}
\bigl\||\ot|^{\frac34}\|_{L^{6}}^{\frac23}\\
&\leq\biggl(\biggl\|\bigl(\pa_z|\bt|^{\frac32}\bigr)^{\frac23}\Bigl(\pa_z\Bigl|\frac{\bt}{r}\Bigr|^{\frac34}\Bigr)^{\frac13}
\Bigl|\frac{\bt}{r}\Bigr|^{\frac34} \biggr\|_{L^{\frac98}}
+\biggl\||r\ut|^{\frac{39p-37}{8(2p-1)}}
\Bigl|\frac{\ut}{r^{1-\e}}\Bigr|^{\frac78\cdot\frac{4-3p}{2p-1}}
\pa_z\Bigl|\frac{\ut}{r^{1-\e}}\Bigr|^{\frac78} \biggr\|_{L^{\frac98}} \biggr)
\bigl\|\nabla|\ot|^{\frac34}\|_{L^2}^{\frac23}\\
&\leq  \Bigl(\bigl\|\pa_z|\bt|^{\frac32}\bigr\|_{L^2}^{\frac23}
\bigl\|\pa_z|B|^{\frac34}\bigr\|_{L^2}^{\frac13}
\bigl\||B|^{\frac34}\bigr\|_{L^{\frac{18}7}}
+\|r\ut\|_{L^\infty}^{\frac{39p-37}{8(2p-1)}}
\bigl\|\pa_z|V|^{\frac78}\bigr\|_{L^2}
\bigl\||V|^{\frac78}\bigr\|_{L^{\frac{18}7\cdot\frac{4-3p}{2p-1}}}^{\frac{4-3p}{2p-1}} \Bigr)
\bigl\|\nabla|\ot|^{\frac34}\|_{L^2}^{\frac23}\\
&\leq  \Bigl(\bigl\|\nabla|\bt|^{\frac32}\bigr\|_{L^2}^{\frac23}
\bigl\|\nabla|B|^{\frac34}\bigr\|_{L^2}^{\frac23}
\bigl\||B|^{\frac34}\bigr\|_{L^2}^{\frac23}
+\|r\ut\|_{L^\infty}^{\frac{39p-37}{8(2p-1)}}
\bigl\|\nabla|V|^{\frac78}\bigr\|_{L^2}^{\frac{37-29p}{6(2p-1)}}
\bigl\||V|^{\frac78}\bigr\|_{L^2}^{\frac{23p-19}{6(2p-1)}} \Bigr)
\bigl\|\nabla|\ot|^{\frac34}\|_{L^2}^{\frac23}\\
&\leq \frac19\bigl\|\nabla |\ot|^{\frac34}\bigr\|_{L^2}^2
+\|B\|_{L^{\frac32}}^{\frac34} \bigl\|\nabla|\bt|^{\frac32}\bigr\|_{L^2}
\bigl\|\nabla|B|^{\frac34}\bigr\|_{L^2}
+\|r\ut_0\|_{L^\infty}^{\frac{3(39p-37)}{16(2p-1)}}
\bigl\|\nabla|V|^{\frac78}\bigr\|_{L^2}^{\frac{37-29p}{4(2p-1)}}
\|V\|_{L^{\frac74}}^{\frac74\cdot\frac{23p-19}{8(2p-1)}}.
\end{align*}
Substituting these two estimates into \eqref{omega321}, we obtain
\begin{align*}
\frac{d}{dt}\|\ot(t)&\|_{L^{\frac32}}^{\frac32}+
\bigl\|\nabla |\ot|^{\frac34}\bigr\|_{L^2}^2+\Bigl\|\frac{|\ot|^{\frac34}}{r}\Bigr\|_{L^2}^2
\leq C\|\eta\|_{L^p}^{\frac{p^2}{2p-1}}\|\ot\|_{L^{\frac32}}^{\frac32}
\bigl\|\nabla|\eta|^{\frac p2}\bigr\|_{L^2}^{\frac {2(2-p)}{2p-1}}\\
&+C\|B\|_{L^{\frac32}}^{\frac34} \bigl\|\nabla|\bt|^{\frac32}\bigr\|_{L^2}
\bigl\|\nabla|B|^{\frac34}\bigr\|_{L^2}
+C\|r\ut_0\|_{L^\infty}^{\frac{3(39p-37)}{16(2p-1)}}
\bigl\|\nabla|V|^{\frac78}\bigr\|_{L^2}^{\frac{37-29p}{4(2p-1)}}
\|V\|_{L^{\frac74}}^{\frac74\cdot\frac{23p-19}{8(2p-1)}}.
\end{align*}
Then a use of Gronwall's inequality, combining with the estimates \eqref{thmmain3} and \eqref{estimatebL3}, gives
\begin{align*}
&\|\ot(t)\|_{L^{\frac32}}^{\frac32}+
\int_0^t \bigl\|\nabla |\ot|^{\frac34}\bigr\|_{L^2}^2+\Bigl\|\frac{|\ot|^{\frac34}}{r}\Bigr\|_{L^2}^2\,dt'\\
&\leq C \exp\Bigl\{C\int_0^t \|\eta\|_{L^p}^{p\cdot\frac{p}{2p-1}}
\bigl\|\nabla|\eta|^{\frac p2}\bigr\|_{L^2}^{\frac {2(2-p)}{2p-1}}\, dt'\Bigr\}
 \cdot \Bigl(\|\ot_0\|_{L^{\frac32}}^{\frac32}\\
&\qquad+\int_0^t \|B\|_{L^{\frac32}}^{\frac34} \bigl\|\nabla|\bt|^{\frac32}\bigr\|_{L^2}
\bigl\|\nabla|B|^{\frac34}\bigr\|_{L^2}+ \|r\ut_0\|_{L^\infty}^{\frac{3(39p-37)}{16(2p-1)}}
\bigl\|\nabla|V|^{\frac78}\bigr\|_{L^2}^{\frac{37-29p}{4(2p-1)}}
\|V\|_{L^{\frac74}}^{\frac74\cdot\frac{23p-19}{8(2p-1)}}\,dt'\Bigr)\\
&\leq C \exp\Bigl\{C\|\eta\|_{L^\infty(0,t;L^p)}^{p\cdot\frac{p}{2p-1}}
\cdot t^{\frac{3(p-1)}{2p-1}}
\bigl(\int_0^t \bigl\|\nabla|\eta|^{\frac p2}\bigr\|_{L^2}^{2} dt'\bigr)^{\frac {2-p}{2p-1}}\Bigr\}\\
&\qquad\qquad\quad\cdot\Bigl(\|\ot_0\|_{L^{\frac32}}^{\frac32}
+\|B\|_{L^\infty\bigl(0,t;L^{\frac32}\bigr)}^{\frac34}
\bigl(\int_0^t \bigl\|\nabla|B|^{\frac34}\bigr\|_{L^2}^2\, dt'\bigr)^{\frac12}
\bigl(\int_0^t \bigl\|\nabla|\bt|^{\frac32}\bigr\|_{L^2}^2\, dt'\bigr)^{\frac12}\\
&\qquad\qquad\qquad\qquad\qquad+\|r\ut_0\|_{L^\infty}^{\frac{3(39p-37)}{16(2p-1)}}
\|V\|_{L^\infty\bigl(0,t;L^{\frac74}\bigr)}^{\frac74\cdot\frac{23p-19}{8(2p-1)}}
\cdot t^{\frac{45(p-1)}{8(2p-1)}}\bigl(\int_0^t \bigl\|\nabla|V|^{\frac78}\bigr\|_{L^2}^2\, dt'\bigr)\Bigr)^{\frac{37-29p}{8(2p-1)}}\\
&\leq C \exp\Bigl\{C(2M_0)^{\frac{2}{2p-1}}
\cdot t^{\frac{3(p-1)}{2p-1}}\Bigr\}
\cdot \Bigl(\|\ot_0\|_{L^{\frac32}}^{\frac32}+\|B_0\|_{L^{\frac32}}^{\frac32}
\|\bt_0\|_{L^{3}}^{\frac32}\exp\bigl\{C(2M_0)^{\frac{2}{2p-1}}\cdot t^{\frac{3(p-1)}{2p-1}}\bigr\}\\
&\qquad\qquad\qquad\qquad\qquad\qquad\qquad\qquad\qquad\qquad\qquad
+\|r\ut_0\|_{L^\infty}^{\frac{3(39p-37)}{16(2p-1)}}
(2M_0)^{\frac{3(3-p)}{4(2p-1)}}\cdot t^{\frac{45(p-1)}{8(2p-1)}}\Bigr).
\end{align*}
From this, and a use of the elementary inequality $x^{\frac{15}{8}} e^{Cx}\leq e^{Cx}$, we achieve
\begin{equation}\label{omega322}
\|\ot(t)\|_{L^{\frac32}}^{\frac32}+
\int_0^t \bigl\|\nabla |\ot|^{\frac34}\bigr\|_{L^2}^2+\Bigl\|\frac{|\ot|^{\frac34}}{r}\Bigr\|_{L^2}^2\,dt'
\leq C N_0 \exp\Bigl\{C(2M_0)^{\frac{2}{2p-1}}
\cdot t^{\frac{3(p-1)}{2p-1}}\Bigr\},
\end{equation}
here we denote $N_0\eqdefa \|\omega_0\|_{L^{\frac32}}^{\frac32}+\|J_0\|_{L^{\frac32}}^{3}
+\|r\ut_0\|_{L^\infty}^{\frac{3(39p-37)}{16(2p-1)}}
(2M_0)^{-\frac{3(p+2)}{4(2p-1)}}$.

To estimate $\widetilde{\omega}=(\orr,\oz)$,
applying the $L^{\frac32}$ estimate to the first and third equations of \eqref{axiomega}
respectively, then putting these two estimates together, we obtain
\begin{equation}\label{oroz1}\begin{split}
\frac23\frac{d}{dt}\bigl(\|\orr(t)\|_{L^{\frac32}}^{\frac32}+
\|\oz&(t)\|_{L^{\frac32}}^{\frac32}\bigr)
+\frac89\bigl\|\nabla |\orr|^{\frac34}\bigr\|_{L^2}^2+\frac89\bigl\|\nabla |\oz|^{\frac34}\bigr\|_{L^2}^2
+\Bigl\|\frac{|\orr|^{\frac34}}{r}\Bigr\|_{L^2}^2\\
&\leq \int_{\R^3} |(\orr\pa_r+\oz\pa_z)\ur| \cdot|\orr|^{\frac12}
+|(\orr\pa_r+\oz\pa_z)\uz| \cdot|\oz|^{\frac12}\,dx.
\end{split}\end{equation}
In view of \eqref{divcurl} and the Biot-Savart law \eqref{Biot},
for $\widetilde{u}=(\ur e_r+\uz e_z)$, we have
\begin{equation}
\|\nabla\widetilde{u}\|_{L^m}\leq C \frac{m^2}{m-1}\|\ot\|_{L^m},\quad\forall 1<m<\infty.
\end{equation}
And then we can estimate the right hand side of \eqref{oroz1} as follows
\begin{align*}
\int_{\R^3} |(\orr\pa_r& +\oz\pa_z)\ur| \cdot|\orr|^{\frac12}
+|(\orr\pa_r+\oz\pa_z)\uz| \cdot|\oz|^{\frac12}\,dx\\
&\leq \bigl(\|\orr\|_{L^{\frac32}}+\|\oz\|_{L^{\frac32}}\bigr)
\|\nabla\widetilde{u}\|_{L^{\frac92}}
\Bigl(\bigl\||\orr|^{\frac12}\bigr\|_{L^9}+\bigl\||\oz|^{\frac12}\bigr\|_{L^9}\Bigr)\\
&\leq \bigl(\|\orr\|_{L^{\frac32}}+\|\oz\|_{L^{\frac32}}\bigr)
\bigl\|\nabla|\ot|^{\frac34}\bigr\|_{L^2}^{\frac43}
\Bigl(\bigl\|\nabla|\orr|^{\frac34}\bigr\|_{L^2}^{\frac23}
+\bigl\|\nabla|\oz|^{\frac34}\bigr\|_{L^2}^{\frac23}\Bigr)\\
&\leq \frac19\Bigl(\bigl\|\nabla|\orr|^{\frac34}\bigr\|_{L^2}^{2}
+\bigl\|\nabla|\oz|^{\frac34}\bigr\|_{L^2}^{2}\Bigr)
+\bigl(\|\orr\|^{\frac32}_{L^{\frac32}}+\|\oz\|^{\frac32}_{L^{\frac32}}\bigr)
\bigl\|\nabla|\ot|^{\frac34}\bigr\|_{L^2}^2.
\end{align*}
Substituting this estimate into \eqref{oroz1}, absorbing the term $\frac19\bigl(\bigl\|\nabla|\orr|^{\frac34}\bigr\|_{L^2}^{2}
+\bigl\|\nabla|\oz|^{\frac34}\bigr\|_{L^2}^{2}\bigr)$ on the right, then a use of Gronwall's inequality
and the estimate \eqref{omega322}, we achieve
\begin{equation}\label{omega323}\begin{split}
\|\orr(t)\|_{L^{\frac32}}^{\frac32}+&
\|\oz(t)\|_{L^{\frac32}}^{\frac32}
+\int_0^t \bigl\|\nabla |\orr|^{\frac34}\bigr\|_{L^2}^2+\bigl\|\nabla |\oz|^{\frac34}\bigr\|_{L^2}^2
+\Bigl\|\frac{|\orr|^{\frac34}}{r}\Bigr\|_{L^2}^2\, dt'\\
&\leq C\bigl(\|\orr_0\|_{L^{\frac32}}^{\frac32}+
\|\oz_0\|_{L^{\frac32}}^{\frac32}\bigr)\cdot
\exp\Bigl\{C N_0 \exp\bigl\{C(2M_0)^{\frac{2}{2p-1}}
\cdot t^{\frac{3(p-1)}{2p-1}}\bigr\}\Bigr\}\\
&\leq C \exp\Bigl\{C N_0 \exp\bigl\{C(2M_0)^{\frac{2}{2p-1}}
\cdot t^{\frac{3(p-1)}{2p-1}}\bigr\}\Bigr\}.
\end{split}\end{equation}
Now combining the estimates \eqref{omega322} and \eqref{omega323}, and the point-wise estimate
\begin{align*}
\nabla|\omega|^{\frac34}\thicksim |\omega|^{-\frac14}|\nabla\omega|
&\thicksim|\omega|^{-\frac14}
\Bigl(|\nabla\orr|+|\nabla\ot|+|\nabla\oz|+\bigl|\frac{\orr}{r}\bigr|+\bigl|\frac{\ot}{r}\bigr|\Bigr)\\
&\lesssim \bigl|\nabla|\orr|^{\frac34}\bigr|+\bigl|\nabla|\ot|^{\frac34}\bigr|+\bigl|\nabla|\oz|^{\frac34}\bigr|
+\frac{|\orr|^{\frac34}}{r}+\frac{|\ot|^{\frac34}}{r},\quad\mbox{a.e.}
\end{align*}
we achieve
\begin{equation}\label{omega32}
\|\omega(t)\|_{L^{\frac32}}^{\frac32}
+\int_0^t \bigl\|\nabla |\omega|^{\frac34}\bigr\|_{L^2}^2\, dt'
\leq C
\exp\Bigl\{C N_0 \exp\bigl\{C(2M_0)^{\frac{2}{2p-1}}
\cdot t^{\frac{3(p-1)}{2p-1}}\bigr\}\Bigr\}.
\end{equation}

$\bullet$ \underline{The $L^{\frac32}$ estimate of $J$}

Noting that $b=\bt e_\theta$, we can write $J=\Jr e_r+\Jz e_z$ with
$J^r=-\pa_z b^\theta,~ J^z=\pa_r b^\theta+\frac{b^\theta}{r}$.
Using the equations for $\bt$ given in \eqref{axi}, and the divergence-free conditions
$\pa_r (r u^r)+\pa_z (r u^z)=0,~\pa_r (r J^r)+\pa_z (r J^z)=0$,
it is not difficult to derive the equations for $J$ as follows:
\begin{equation}\label{axiJ}
\left\{
\begin{array}{l}
\displaystyle \pa_t \Jr+(u^r\pa_r+u^z\pa_z) \Jr-(\Delta-\frac{1}{r^2})\Jr=
(\Jr\pa_r+\Jz\pa_z)\ur+2\frac{\ur}{r}\Jr-2\frac{\bt}{r}\pa_z \ur,\\
\displaystyle \pa_t J^z+(u^r\pa_r+u^z\pa_z) J^z-\Delta J^z=
(\Jr\pa_r+\Jz\pa_z)\uz+2\frac{\ur}{r}\Jz+2\frac{\bt}{r}\bigl(\pa_r \ur-\frac{\ur}{r}\bigr).
\end{array}
\right.
\end{equation}
Applying the $L^{\frac32}$ estimate of \eqref{axiJ},
and in view of the following point-wise estimate
\begin{equation}\label{Biotutilde}
\bigl|{\nabla}\ur\bigr|+\bigl|{\nabla}\uz\bigr|+\bigl|\frac{\ur}{r}\bigr|
\leq C|\nabla (\ur e_r+\uz e_z)|,\quad\mbox{a.e.}
\end{equation}
as well as the Biot-Savart law \eqref{Biot}, we obtain
\begin{align*}
&\frac{d}{dt}\bigl(\|\Jr(t)\|_{L^{\frac32}}^{\frac32}+
\|\Jz(t)\|_{L^{\frac32}}^{\frac32}\bigr)
+\bigl\|\nabla |\Jr|^{\frac34}\bigr\|_{L^2}^2+\bigl\|\nabla |\Jz|^{\frac34}\bigr\|_{L^2}^2
+\Bigl\|\frac{|\Jr|^{\frac34}}{r}\Bigr\|_{L^2}^2\\
&\leq C \int_{\R^3} \bigl(|\Jr|^{\frac32}+|\Jz|^{\frac32}\bigr)|\nabla\widetilde{u}|
+|B|\bigl(|\Jr|^{\frac12}+|\Jz|^{\frac12}\bigr)|\nabla\widetilde{u}|\,dx\\
&\leq C \bigl(\|\Jr\|_{L^{\frac52}}^{\frac32}+\|\Jz\|_{L^{\frac52}}^{\frac32}\bigr)\|\ot\|_{L^{\frac52}}
+C\|B\|_{L^{\frac52}}\bigl(\|\Jr\|_{L^{\frac52}}^{\frac12}+\|\Jz\|_{L^{\frac52}}^{\frac12}\bigr)\|\ot\|_{L^{\frac52}}\\
&\leq C \Bigl( \|\Jr\|_{L^{\frac32}}^{\frac35} \bigl\|\nabla |\Jr|^{\frac34}\bigr\|_{L^2}^{\frac65}
+ \|\Jz\|_{L^{\frac32}}^{\frac35} \bigl\|\nabla |\Jz|^{\frac34}\bigr\|_{L^2}^{\frac65}
+ \|B\|_{L^{\frac32}}^{\frac35} \bigl\|\nabla |B|^{\frac34}\bigr\|_{L^2}^{\frac65}\Bigr)
 \|\ot\|_{L^{\frac32}}^{\frac25} \bigl\|\nabla |\ot|^{\frac34}\bigr\|_{L^2}^{\frac45}\\
&\leq \frac12\bigl\|\nabla |\Jr|^{\frac34}\bigr\|_{L^2}^2+\frac12\bigl\|\nabla |\Jz|^{\frac34}\bigr\|_{L^2}^2
+C\bigl(\|\Jr\|_{L^{\frac32}}^{\frac32}+\|\Jz\|_{L^{\frac32}}^{\frac32}\bigr)
 \|\ot\|_{L^{\frac32}} \bigl\|\nabla |\ot|^{\frac34}\bigr\|_{L^2}^{2}\\
&\qquad\qquad\qquad\qquad\qquad\qquad\qquad\qquad\qquad\qquad\qquad
+\|B\|_{L^{\frac32}}^{\frac35} \bigl\|\nabla |B|^{\frac34}\bigr\|_{L^2}^{\frac65}
 \|\ot\|_{L^{\frac32}}^{\frac25} \bigl\|\nabla |\ot|^{\frac34}\bigr\|_{L^2}^{\frac45}.
\end{align*}
Absorbing the term $\frac12\bigl\|\nabla |\Jr|^{\frac34}\bigr\|_{L^2}^2
+\frac12\bigl\|\nabla |\Jz|^{\frac34}\bigr\|_{L^2}^2$
on the right, then a use of Gronwall's inequality
and the estimates \eqref{estimateB},~\eqref{omega322}, leads to
\begin{align*}
&\|\Jr(t)\|_{L^{\frac32}}^{\frac32}+
\|\Jz(t)\|_{L^{\frac32}}^{\frac32}+\int_0^t
\bigl\|\nabla |\Jr|^{\frac34}\bigr\|_{L^2}^2+\bigl\|\nabla |\Jz|^{\frac34}\bigr\|_{L^2}^2
+\Bigl\|\frac{|\Jr|^{\frac34}}{r}\Bigr\|_{L^2}^2\, dt'\\
&\lesssim\Bigl(\|J_0\|_{L^{\frac32}}^{\frac32}+\int_0^t
\|B\|_{L^{\frac32}}^{\frac35} \bigl\|\nabla |B|^{\frac34}\bigr\|_{L^2}^{\frac65}
 \|\ot\|_{L^{\frac32}}^{\frac25} \bigl\|\nabla |\ot|^{\frac34}\bigr\|_{L^2}^{\frac45}\, dt'\Bigr)
\cdot \exp\Bigl\{\int_0^t\|\ot\|_{L^{\frac32}} \bigl\|\nabla |\ot|^{\frac34}\bigr\|_{L^2}^{2}\, dt'\Bigr\}\\
&\lesssim\Bigl(\|J_0\|_{L^{\frac32}}^{\frac32}
+\|B_0\|_{L^{\frac32}}^{\frac32} N_0^{\frac23} \exp\bigl\{C(2M_0)^{\frac{2}{2p-1}}
\cdot t^{\frac{3(p-1)}{2p-1}}\bigr\}\Bigr)
\cdot\exp\Bigl\{C N_0^{\frac53} \exp\bigl\{C(2M_0)^{\frac{2}{2p-1}}
\cdot t^{\frac{3(p-1)}{2p-1}}\bigr\}\Bigr\}\\
&\lesssim\Bigl(N_0^{\frac12}
+ N_0^{\frac76} \exp\bigl\{C(2M_0)^{\frac{2}{2p-1}}
\cdot t^{\frac{3(p-1)}{2p-1}}\bigr\}\Bigr)
\cdot\exp\Bigl\{C N_0^{\frac53} \exp\bigl\{C(2M_0)^{\frac{2}{2p-1}}
\cdot t^{\frac{3(p-1)}{2p-1}}\bigr\}\Bigr\}\\
&\lesssim \exp\Bigl\{C N_0^{\frac53} \exp\bigl\{C(2M_0)^{\frac{2}{2p-1}}
\cdot t^{\frac{3(p-1)}{2p-1}}\bigr\}\Bigr\}.
\end{align*}
Combining this estimate, and the point-wise estimate
\begin{align*}
\nabla|J|^{\frac34}\thicksim |J|^{-\frac14}|\nabla J|
&\thicksim|J|^{-\frac14}
\Bigl(|\nabla\Jr|+|\nabla\Jz|+\bigl|\frac{\Jr}{r}\bigr|\Bigr)\\
&\lesssim \bigl|\nabla|\Jr|^{\frac34}\bigr|+\bigl|\nabla|\Jz|^{\frac34}\bigr|
+\frac{|\Jr|^{\frac34}}{r},\quad\mbox{a.e.}
\end{align*}
we achieve
\begin{equation}\label{estimateJ}
\|J(t)\|_{L^{\frac32}}^{\frac32}+\int_0^t
\bigl\|\nabla |J|^{\frac34}\bigr\|_{L^2}^2\, dt'
\leq C\exp\Bigl\{C N_0^{\frac53} \exp\bigl\{C(2M_0)^{\frac{2}{2p-1}}
\cdot t^{\frac{3(p-1)}{2p-1}}\bigr\}\Bigr\}.
\end{equation}

\section{The $L^1(0,t;\Lip)$ estimate of $u$}\label{SecL1}

Let $\cP\eqdefa \rm{Id}+\nabla(-\Delta)^{-1}\div$ denote the Leray projector onto divergence-free vector fields.
For any selected $j\in\N$, applying $\dj\cP$ to the velocity equation in \eqref{MHD} gives
\begin{equation*}
\d_t(\dj u)-\Delta(\dj u)+\cP\dj\div (u\otimes u)=\cP\dj \div (b\otimes b),
\end{equation*}
where we have used the divergence-free condition on $u$ and $b$. Then Duhamel formula gives
\begin{equation}\label{mildsol}
\dj u(t)=e^{t\Delta}(\dj u_0)-\int_0^t e^{(t-\tau)\Delta}\,\cP\dj\div (u\otimes u-b\otimes b)(\tau,x)\,d\tau.
\end{equation}
Noting that $\cP$ satisfies the condition \eqref{conditionMihlin} with $m=0$,
thus we can use Lemma \ref{GBern} and Lemma \ref{heatflow} to get that,
there holds uniformly for every $j\in\Z$:
\begin{align*}
\|\dj u\|_{L_t^1 L^\infty}
&\lesssim\int_0^t e^{-c\tau\cdot2^{2j}}\|\dj u_0\|_{L^\infty}d\tau
+\int_0^t\bigl(\|\dj\div (u\otimes u)(\tau',\cdot)\|_{L^\infty}\ast_{\tau'}e^{-c\tau'\cdot2^{2j}}\bigr)(\tau)\,d\tau\\
&\quad\qquad+\int_0^t\bigl(\|\dj\div (b\otimes b)(\tau',\cdot)\|_{L^\infty}\ast_{\tau'}e^{-c\tau'\cdot2^{2j}}\bigr)(\tau)\,d\tau\\
&\lesssim \bigl\|e^{-c\tau\cdot2^{2j}}\bigr\|_{L^1(0,t)}\cdot\bigl(
\|\dj u_0\|_{L^\infty}+\|\dj\div (u\otimes u)\|_{L_t^1 L^\infty}
+\|\dj\div (b\otimes b)\|_{L_t^1 L^\infty}\bigr)\\
&\lesssim2^{-2j}\cdot\bigl(
\|\dj u_0\|_{L^\infty}+\|\dj\div (u\otimes u)\|_{L_t^1 L^\infty}
+\|\dj\div (b\otimes b)\|_{L_t^1 L^\infty}\bigr)
\end{align*}
Multiplying both sides by $2^j$
and then summing up over all $j\in\Z$, we obtain
\begin{equation}\begin{split}\label{Lip1}
\|u\|_{L_t^1B^{1}_{\infty,1}}
&\lesssim \sum_{j\in\Z}2^{-j}\cdot\bigl(
\|\dj u_0\|_{L^\infty}+\|\dj\div (u\otimes u)\|_{L_t^1 L^\infty}
+\|\dj\div (b\otimes b)\|_{L_t^1 L^\infty}\bigr)\\
&\lesssim \|u_0\|_{B^{-1}_{\infty,1}}
+\|u\otimes u\|_{L_t^1 \B0}+\|b\otimes b\|_{L_t^1 \B0}.
\end{split}\end{equation}
Exactly along the same line, and a use of Minkowski's inequality, we obtain
\begin{equation}\begin{split}\label{Lip2}
\|u\|_{L_t^\infty B^{-1}_{\infty,1}}
&\leq \sum_{j\in\Z}2^{-j} \|\dj u\|_{L_t^\infty L^\infty}\\
&\lesssim \sum_{j\in\Z}2^{-j} \bigl(
\|\dj u_0\|_{L^\infty}+\|\dj\div (u\otimes u)\|_{L_t^1 L^\infty}
+\|\dj\div (b\otimes b)\|_{L_t^1 L^\infty}\bigr)\\
&\lesssim\|u_0\|_{B^{-1}_{\infty,1}}
+\|u\otimes u\|_{L_t^1 \B0}+\|b\otimes b\|_{L_t^1 \B0}.
\end{split}\end{equation}

Using Lemma \ref{productlaw}, and the embedding $B^{\frac12}_{6,1}\hookrightarrow
\B0\hookrightarrow L^\infty$, we have
\begin{equation}\label{utimesu}
\|u\otimes u\|_{L_t^1 \B0}\leq C\|u\otimes u\|_{L_t^1 B^{\frac12}_{6,1}}
\leq C\|u\|_{L_t^2 B^{\frac12}_{6,1}}^2.
\end{equation}
By Lemma \ref{Bern}, we get, for any integer $N$, that
\begin{equation}\begin{split}\label{N}
\|u\|_{B^{\frac12}_{6,1}}&=\sum_{j\in\Z}2^{\frac12j}\|\dj u\|_{L^6}\\
&\lesssim\sum_{j\leq N} 2^j\|\dj u\|_{L^3}+\sum_{j> N} 2^{-\frac{j}3}\|\nabla\dj u\|_{L^{\frac92}}\\
&\lesssim 2^N\|\omega\|_{L^{\frac32}}+2^{-\frac{N}{3}}\bigl\|\nabla|\omega|^{\frac34}\bigr\|_{L^2}^{\frac43},
\end{split}\end{equation}
where we have used the Biot-Savart law \eqref{Biot} and the Sobolev embedding such that
$$\|\nabla u\|_{L^{\frac92}}\lesssim\|\omega\|_{L^{\frac92}}\lesssim
\bigl\|\nabla|\omega|^{\frac34}\bigr\|_{L^2}^{\frac43}.$$
Choosing a proper $N$ satisfying $2^N\thicksim\bigl\|\nabla|\omega|^{\frac34}\bigr\|_{L^2}
\|\omega\|_{L^{\frac32}}^{-\frac34}$ in \eqref{N}, leads to
$$\|u\|_{B^{\frac12}_{6,1}}\lesssim\bigl\|\nabla|\omega|^{\frac34}\bigr\|_{L^2}
\|\omega\|_{L^{\frac32}}^{\frac14}.$$
Substituting this estimate into \eqref{utimesu}, and using \eqref{omega32}, we obtain
\begin{equation}\begin{split}\label{estimateutimesu}
\|u\otimes u\|_{L_t^1 \B0}& \leq C\bigl\|\nabla|\omega|^{\frac34}\bigr\|_{L^2(0,t;L^2)}^2
\|\omega\|_{L^\infty(0,t;L^{\frac32})}^{\frac12}\\
&\leq C
\exp\Bigl\{C N_0 \exp\bigl\{C(2M_0)^{\frac{2}{2p-1}}
\cdot t^{\frac{3(p-1)}{2p-1}}\bigr\}\Bigr\}.
\end{split}\end{equation}

Exactly along the same line, by using \eqref{estimateJ}, we can obtain
\begin{equation}\label{estimatebtimesb}
\|b\otimes b\|_{L_t^1 \B0}\leq C
\exp\Bigl\{C N_0^{\frac53} \exp\bigl\{C(2M_0)^{\frac{2}{2p-1}}
\cdot t^{\frac{3(p-1)}{2p-1}}\bigr\}\Bigr\}.
\end{equation}

Substituting the estimates \eqref{estimateutimesu},~\eqref{estimatebtimesb} into \eqref{Lip1} and \eqref{Lip2},
finally we achieve
\begin{equation}
\|u\|_{L_t^\infty B^{-1}_{\infty,1}}+\|u\|_{L_t^1 B^{1}_{\infty,1}}
\leq \|u_0\|_{B^{-1}_{\infty,1}}
+C\exp\Bigl\{C \bigl(N_0+N_0^{\frac53}\bigr) \exp\bigl\{C(2M_0)^{\frac{2}{2p-1}}
\cdot t^{\frac{3(p-1)}{2p-1}}\bigr\}\Bigr\},
\end{equation}
which is the desired $L^1(0,t;\Lip)$ estimate of $u$. This completes the proof of the theorem.

\smallskip
\noindent {\bf Acknowledgments.}
This work was
done when I was visiting Morningside Center of Mathematics, Chinese Academy of Sciences. I appreciate the
hospitality of MCM.

\medskip


\begin{thebibliography}{9999}

 \bibitem{Abidi} H. Abidi,
{R\'{e}sultats de r\'{e}gularit\'{e} de solutions axisym\'{e}triques pour le syst\`{e}me de Navier-Stokes},
{\it Bull. Sci. Math.},
   {\bf 132} (2008), no. 7,
 pages 592每624.

 \bibitem{Paicu} H. Abidi and M. Paicu,
{Global existence for the magnetohydrodynamic system in critical spaces},
{\it Proc. Roy. Soc. Edinburgh Sect. A},
   {\bf 138} (2008), no.3,
 pages 447每476.

\bibitem {BCD} H. Bahouri, J.-Y. Chemin and R. Danchin, {\it Fourier Analysis and
Nonlinear Partial Differential Equations}, Grundlehren der
Mathematischen Wissenschaften, Springer, 2010.

\bibitem{CKN} L. Caffarelli, R. Kohn and L. Nirenberg,
{Partial regularity of suitable weak solutions of the Navier-Stokes equations},
{\it Comm. Pure Appl. Math.},
   {\bf 35} (1982), No. 6,
 pages 771每831.


\bibitem{CL02} D. Chae and J. Lee,  On the regularity of the axisymmetric solutions of the Navier-Stokes equations,
{\it  Math. Z.}, {\bf  239 }  (2002),   645-671.

\bibitem{Chenhui} H. Chen, D. Fang and T. Zhang,
{Regularity of 3D axisymmetric Navier-Stokes equations},
{\it Discrete Contin. Dyn. Syst.},
   {\bf 37} (2017), No. 4,
 pages 1923每1939.

 \bibitem{Lions} G. Duvaut and J. L. Lions,
{In\'{e}quations en thermo\'{e}lasticit\'{e} et magn\'{e}tohydrodynamique},
{\it Arch. Ration. Mech. Anal.},
   {\bf 46} (1972),
 pages 241-279.

  \bibitem{La} O.~ A.  Lady$\breve{z}$enskaja, Unique global solvability of the three-dimensional Cauchy problem for the Navier-Stokes
 equations in the presence of axial symmetry, (Russian) {\it Zap. Nau$\breve{c}$n. Sem. Leningrad. Otdel. Mat. Inst. Steklov. (LOMI)},
 {\bf 7} (1968), 155-177.

\bibitem{Lei}
Z. Lei,
     {On axially symmetric incompressible magnetohydrodynamics in three dimensions},
   {\it J. Differential Equations},
   {\bf 259} (2015),
 pages 3202-3215.

 \bibitem{LeiZhang} Z. Lei and Q. Zhang,
Criticality of the axially symmetric Navier-Stokes Equations,
 arXiv: 1505.02628v2

\bibitem{LMNP} S. Leonardi, J. M\'{a}lek, J.  Ne$\breve{}{c}$as and M. Pokorny,  On axially
symmetric flows in $\mathbb{R}^3,$ {\it  Z. Anal. Anwendungen}, {\bf
18} (1999),  639-649.

\bibitem{LZ} Y. Liu and P. Zhang,
 On the global well-posedness of 3-D axi-symmetric Navier-Stokes system with small swirl component,
 arXiv: 1702.06279v1

\bibitem{Temam}
M. Sermange and R. Temam,
     {Some mathematical questions related to the MHD equations},
   {\it  Comm. Pure Appl. Math.},
   {\bf  36} (1983), no. 5,
 pages 635每664.

\bibitem{UY} M.~ R. Ukhovskii, and V. I. Iudovich,  Axially symmetric flows of ideal and viscous fluids
filling the whole space, {\it J. Appl. Math. Mech.}, {\bf 32} (1968)
52-61.

\bibitem{Wei}
D. Wei,
     {Regularity criterion to the axially symmetric Navier-Stokes equations},
   {\it  J. Math. Anal. Appl.},
   {\bf 435} (2016), no. 1,
 pages 402每413.

\bibitem{ZZT2} P. Zhang and T. Zhang, Global axi-symmetric solutions to 3-D   Navier-Stokes
System,  {\it Int. Math. Res. Not. IMRN }, Vol. 2013, No. 3,
610-642.


\end{thebibliography}
\end{document}